\documentclass[10pt, a4paper]{amsart}

\usepackage{amssymb,amsmath,amsthm}
\usepackage{tikz-cd}
\usetikzlibrary{arrows}
\usepackage{fullpage}
\usepackage{graphicx}
\usepackage{hyperref}
\usepackage{mathtools}
\usepackage[titletoc,title]{appendix}
\usepackage{graphicx}
\usepackage{caption}
\usepackage{subcaption}
\usepackage{blindtext}
\usepackage[titletoc,title]{appendix}
\usepackage[english]{babel}
\usepackage{longtable}
\usepackage{mathrsfs}
\usepackage[ruled,vlined]{algorithm2e}
\usepackage{cite}



\newtheorem{thm}{Theorem}[section]
\newtheorem{lma}[thm]{Lemma}
\newtheorem{cor}[thm]{Corollary}
\newtheorem{prop}[thm]{Proposition}
\theoremstyle{definition}
\newtheorem{defn}[thm]{Definition}

\newtheorem*{thm*}{Theorem}

\newcommand{\fakeenv}{} 

\newenvironment{restate}[2]  
{ 
 \renewcommand{\fakeenv}{#2} 
 \theoremstyle{plain} 
 \newtheorem*{\fakeenv}{#1~\ref{#2}} 
 \begin{\fakeenv}
}
{
 \end{\fakeenv}
}

\makeatletter
\def\Ddots{\mathinner{\mkern1mu\raise\p@
\vbox{\kern7\p@\hbox{.}}\mkern2mu
\raise4\p@\hbox{.}\mkern2mu\raise7\p@\hbox{.}\mkern1mu}}
\makeatother


\newcommand{\Z}{\mathbb{Z}}


\DeclareSymbolFontAlphabet{\amsmathbb}{AMSb}

\DeclareMathOperator{\sub}{Sub}

\DeclareMathOperator{\ap}{Arith}

\DeclareMathOperator{\word}{\mathsf{word}}
\DeclareMathOperator{\red}{\mathsf{red}}

\DeclareMathOperator{\core}{Core}
\DeclareMathOperator{\rcore}{Core^{rel}}

\DeclarePairedDelimiter\abs{\lvert}{\rvert}

\makeatletter
\let\oldabs\abs
\def\abs{\@ifstar{\oldabs}{\oldabs*}}

\title{The fully compressed subgroup membership problem}
\date{\today}
\author{Marco Linton}

\address{Mathematics institute, Zeeman building, university of Warwick, Coventry, CV4 7AL}

\email{marco.linton@warwick.ac.uk}

\begin{document}

\maketitle

\begin{abstract}
Suppose that $F$ is a free group and $k$ is a natural number. We show that the fully compressed membership problem for $k$-generated subgroups of $F$ is solvable in polynomial time. In order to do this, we adapt the theory of Stallings' foldings to handle edges with compressed labels. This partially answers a question of Markus Lohrey.
\end{abstract}

\section{Introduction}

The rational subset membership problem for free monoids is a classic problem in formal language theory. This problem naturally extends to the world of group theory as follows. Let $\Sigma$ be a finite set, let $G$ be a group and $\pi:\Sigma^*\to G$ a surjective morphism. Then the \emph{rational subset membership problem for $G$} is to decide, given as input a finite state automaton $\mathcal{A}$ over $\Sigma$ and a word $w\in \Sigma^*$, whether $\pi(w) \in \pi(L(\mathcal{A}))$. This problem is known to be solvable when $G$ is free \cite{benois_69}, abelian \cite{grunschlag_99} or a Droms right angled Artin group \cite{lohrey_08}. The first two classes also admit polynomial time solutions. An important class of rational subsets of groups is that of subgroups. The \emph{subgroup membership problem} is the rational subset membership problem when restricted to this class. The full rational subset membership problem is strictly harder: the subgroup membership problem is solvable for nilpotent groups \cite{avenhaus_89}, but there are nilpotent groups in which the rational subset membership problem is undecideable \cite{romankov_99}.



Often, requiring inputs to be words over the generators may not be the most natural thing to do. Take, for instance, the linear groups. Each element of a linear group can be represented by a matrix with binary integer entries, an exponentially more succinct representation. In \cite{gurevich_05}, a variant of the subgroup membership problem was considered where the inputs could contain powers $x^i$ where $x\in \Sigma$ and $i\in \Z$ is encoded in binary. The authors show that this variant remains solvable in polynomial time in the class of free groups. As a consequence, they also show that the subgroup membership problem for $\text{PSL}(2, \Z)$ is solvable in polynomial time, even when the input matrix entries are encoded in binary. In \cite{lohrey_21}, the subgroup membership problem for free groups was further generalised so that the input could contain power words $w^i$ where $w\in \Sigma^*$ and $i\in \Z$ is encoded in binary. The polynomial time solution to this variant was similarly used to provide a polynomial time solution to the membership problem in $\text{GL}(2, \Z)$, even when the input matrix entries are encoded in binary. Another version of the compressed membership problem can be found in \cite{myasnikov_11}, arising as an intermediate step to solve the word problem in polynomial time in the Baumslag--Gersten group.

A more general way of succinctly representing elements of $\Sigma^*$ is given by \emph{straightline programs} (SLPs): context free grammars that generate exactly one word. In \cite{plandowski_99} it was asked whether the rational subset membership problem for free monoids was solvable in polynomial time when the inputs were compressed using SLPs. In \cite{jez_12}, Artur Je\.{z} settled this question in the affirmative. In \cite{lohrey_21} and \cite{lohrey_21_compression}, Markus Lohrey asks whether this problem is solvable in polynomial time for subgroups of free groups. We call this the \emph{fully compressed subgroup membership problem} for free groups. In the case that the number of input generators is fixed, we answer Markus Lohrey's question in the affirmative.

\begin{restate}{Corollary}{main_cor}
The fully compressed membership problem for $k$-generated subgroups of a free group is in P.
\end{restate}

This result follows directly from our stronger result, Theorem \ref{main_subgroup}. See Algorithm \ref{CDFA_algorithm} for the sketch of our compressed Stallings' folding algorithm.

\subsection*{Acknowledgments} We would like to thank Saul Schleimer for suggesting the problem and for the many helpful mathematical discussions.

\section{Preliminaries}

We will fix $\Sigma$ to be a finite alphabet and $\Sigma^*$ to be the free monoid generated by $\Sigma$. A \emph{letter} is an element of $\Sigma$; a \emph{word} is an element of $\Sigma^*$. The symbol $\epsilon$ will denote the \emph{empty word}.

A \emph{factorisation} of a word $w\in \Sigma^*$ is an equality $w = w_0\cdot w_1\cdot \ldots\cdot w_n$ where $w_i\in \Sigma^*$ for all $0\leq i\leq n$. Now let $w = w_0\cdot w_1\cdot\ldots\cdot w_n\in \Sigma^*$ be the unique factorisation with $w_i\in \Sigma$. Given $0\leq i\leq j\leq n$, we make the following definitions:
\begin{enumerate}
\item $w[i] = w_i$ is the $i+1^{\text{th}}$ letter of $w$,
\item $\abs{w} = n+1$ is the \emph{length} of $w$,
\item $w[i:j] = w_i\cdot w_{i+1}\cdot \ldots\cdot w_{j-1}$ is a \emph{subword},
\item $w[:i] = w_0\cdot w_1\cdot \ldots\cdot w_{i-1}$ is a \emph{prefix},
\item $w[j:] = w_j\cdot w_{j+1}\cdot \ldots\cdot w_n$ is a \emph{suffix},
\end{enumerate}

Let $x, w\in \Sigma^*$ be words, a \emph{left $w$--factorisation of length $k$} of $x$ is a factorisation of the form:
\[
x = w[i:]\cdot w^n\cdot w[:j]\cdot z
\]
such that $\abs{w[i:]\cdot w^n\cdot w[:j]} = k$. We define \emph{right $w$-factorisations} analogously. 

We write $\ap(j, k, l)$ for the arithmetic progression $\{i\cdot j + l \mid 0\leq i\leq k\}$.

Let $\Sigma^{-1}$ denote the set of formal inverses of $\Sigma$. The free group $F(\Sigma)$, freely generated by $\Sigma$, comes equipped with a natural surjective monoid homomorphism $\pi:(\Sigma\sqcup\Sigma^{-1})^*\to F(\Sigma)$. The map $\pi$ has a section $\red:F(\Sigma)\to (\Sigma\sqcup\Sigma^{-1})^*$ mapping each element $g\in F(\Sigma)$ to the unique freely reduced word in $\pi^{-1}(g)$.

\subsection{Compression}

We refer the reader to \cite{lohrey_12} for an excellent survey on straight-line programs and compressed finite state automata. A \emph{straight-line program}, or SLP, is a tuple $\mathbb{X} = \langle \Sigma, \mathcal{X}, X_n, \mathcal{P}\rangle$ consisting of the following: 
\begin{enumerate}
\item $\Sigma$ is a finite alphabet of \emph{terminal} letters,
\item $\mathcal{X} = \{X_1, \ldots, X_n\}$ is a finite alphabet of \emph{non-terminal} letters,
\item $X_n$ is the \emph{root} non-terminal,
\item $\mathcal{P} = \{X_i\to W_i\}_{i = 1}^n$ is the set of \emph{production rules} where $W_i\in (\Sigma\cup \{X_1, \ldots, X_{i-1}\})^*$.
\end{enumerate}

We will denote by $\word{\mathbb{X}}\in \Sigma^*$ the word obtained by repeatedly replacing each non-terminal with its production. The \emph{height} of a non-terminal $X_i\in \mathcal{X}$, denoted by $||X_i||$, is inductively defined as follows: the height of a terminal is zero and the height of a non-terminal is the maximum height of the symbols appearing in its production, plus one.

If $X$ is a non-terminal and $i$ and $j$ are positions in $\word{X}$, then we may write $X[i:j]$; we call this a \emph{truncated non-terminal}. The intention is that we have $\word{(X[i:j])} = \word{(X)}[i:j]$. A \emph{composition system} $\mathbb{X} = \langle \Sigma, \mathcal{X}, X_n, \mathcal{P}\rangle$ is defined in the same way as an SLP except that production rules can contain truncated non-terminals. The expressive power of SLPs and composition systems is virtually the same; this is because a composition system can be transformed into an equivalent SLP in quadratic time \cite{hagenah_00}.

We abuse notation and factorise SLPs just as we factorise words. That is, if $W$ is a non-terminal, then we write $W = W_0\cdot W_1\cdot\ldots\cdot W_n$ when we mean $\word{W} = \word{W_0}\cdot \word{W_1}\cdot\ldots\cdot \word{W_n}$. Thus, if $X$ and $W$ are non-terminals, then a \emph{left $W$-factorisation of $X$} is a factorisation of the form:
\[
X = W[i:]\cdot W^n\cdot W[:j]\cdot Z
\]
where $Z$ is also a non-terminal.

\begin{prop}
\label{SLP_left_right}
There is a polynomial-time algorithm that, given as input two non-terminals $X$ and $W$, decides if $X$ has a left (or right) $W$-factorisation of length $k\geq |\word{W}|$. If it does, then the algorithm also computes a maximal left (or right) $W$-factorisation of $X$.
\end{prop}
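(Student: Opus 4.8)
The plan is to reduce the existence and computation of a left $W$-factorisation to a few standard compressed-word primitives, all computable in polynomial time for composition systems, and hence for SLPs via the quadratic conversion of \cite{hagenah_00}: length computation and extraction of a factor $X[a{:}b]$ as a truncated non-terminal; equality testing and, by binary search on the length, the longest common prefix $\mathrm{LCP}(\cdot,\cdot)$ of two non-terminals; and construction of a power $W^N$ by repeated squaring, whose SLP has size $O(\log N)$ times that of $W$. See \cite{lohrey_12} for these. The first observation is that a left $W$-factorisation of length $k$ at phase $i$, namely $X = W[i{:}]\cdot W^n\cdot W[{:}j]\cdot Z$ with $\abs{W[i{:}]\cdot W^n\cdot W[{:}j]} = k$, exists if and only if $\word{X}[{:}k]$ equals the length-$k$ prefix of $\word{W}[i{:}]\cdot\word{W}^{\infty}$; moreover, once $i$ is fixed, the maximal such $k$ is exactly $\mathrm{LCP}(X,\,W[i{:}]\cdot W^N)$ for any $N$ with $N\cdot|\word{W}|\geq |\word{X}|$. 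Thus everything reduces to finding an admissible phase $i$ and then reading off $n$, $j$, and $Z$ from the resulting $k$ (and, trivially, reporting no factorisation when $|\word{X}|<|\word{W}|$).

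The heart of the argument is finding the phase $i\in\{0,\dots,|\word{W}|-1\}$ without iterating over the exponentially many candidates. The key structural fact is that any factorisation of length $k\geq |\word{W}|$ forces $\word{X}[{:}|\word{W}|] = \word{W}[i{:}]\cdot\word{W}[{:}i]$; that is, the length-$|\word{W}|$ prefix of $X$ must be a cyclic rotation of $\word{W}$. Since every cyclic rotation of $\word{W}$ occurs in $\word{W}\cdot\word{W}$ at a position in $\{0,\dots,|\word{W}|-1\}$, I would compute the prefix $P = X[{:}|\word{W}|]$ and run compressed pattern matching for the pattern $P$ in the text $W\cdot W$; this is polynomial-time (see \cite{jez_12,lohrey_12}), and the occurrence positions lying in $\{0,\dots,|\word{W}|-1\}$ are precisely the admissible phases. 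If there is no such occurrence, I report that no left $W$-factorisation of length $\geq|\word{W}|$ exists. Otherwise I pick any occurrence $i$; by the synchronisation property of the primitive root of $\word{W}$, all occurrences are congruent modulo $\per(\word{W})$ (they form a single arithmetic progression), so the choice of representative does not affect the answer.

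Having fixed an admissible $i$, I would build an SLP for $W[i{:}]\cdot W^N$ and set $k = \mathrm{LCP}(X, W[i{:}]\cdot W^N)$. If $k < |\word{W}|$ there is no factorisation of the required length; otherwise I output the phase $i$ together with $n = \lfloor (k-(|\word{W}|-i))/|\word{W}|\rfloor$, $j = (k-(|\word{W}|-i))\bmod|\word{W}|$, and the remainder $Z = X[k{:}]$. Maximality is immediate, since $k$ is by definition the exact length of agreement between $X$ and the periodic word $\word{W}[i{:}]\cdot\word{W}^{\infty}$, and independence from the chosen occurrence follows from the shared residue noted above. The right $W$-factorisation case is handled by the mirror-image argument: reverse all words, replace $\mathrm{LCP}$ by the longest common suffix, and match against suffixes rather than prefixes.

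The step I expect to be the main obstacle is robustly pinning down the phase $i$. Naively one would search all offsets, which is impossible under compression; the resolution is to package the phase search as a single compressed pattern-matching query against $W\cdot W$ and to invoke the synchronisation property of primitive words to guarantee that the admissible offsets form one arithmetic progression, and hence a single well-defined maximal length. A secondary technical point, routine but necessary to keep the running time polynomial, is that the factor extractions $X[{:}|\word{W}|]$ and $X[k{:}]$ and the concatenations produce composition systems that must be converted to genuine SLPs via \cite{hagenah_00} before the equality and pattern-matching primitives are applied.
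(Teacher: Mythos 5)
Your proposal is correct and follows essentially the same route as the paper: first recover the phase $i$ by testing whether $X[:\abs{\word{W}}]$ is a cyclic rotation of $\word{W}$ (the paper invokes the compressed conjugacy problem of \cite{saul_08}, you use the equivalent pattern-match of $X[:\abs{\word{W}}]$ in $W\cdot W$), and then obtain the maximal length as a longest common prefix of the remainder with a sufficiently large power of $W$. The only difference is which polynomial-time compressed-word primitives are cited, not the structure of the argument.
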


\begin{proof}
We prove the result for left $W$-factorisations. We first assume that $|\word{X}| = |\word{W}|$. Hence the problem becomes to decide if there is some integer $i$ such that 
\[
X = W[i:]\cdot W[:i].
\]
This is the conjugacy problem and can be solved in polynomial time by Theorem 3.7 in \cite{saul_08}. This algorithm also produces a conjugating word. Equivalently, the index $i$. So now suppose $|\word{X}| > |\word{W}|$. By the above, we can decide if $X[:\abs{\word{W}}]$ has a $W$-factorisation of length $|\word{W}|$. Moreover, if so, we may also compute an index $i$ such that $X[:i] = W[-i:]$. Then by computing the largest prefix that $X[i:]$ and $W^{\left\lceil\frac{|\word{W}|}{|\word{X}|}\right\rceil}$ have in common gives us the required factorisation. This may be done in polynomial time \cite{gasieniec_96}.
\end{proof}

Let $X$ and $W$ be non-terminals and $j$ an integer. We will say $X$ crosses $W$ at $i$ if there is an integer $j$ such that $j\leq i<j+\abs{\word{X}}$ and 
\[
W[j:j+\abs{\word{X}}] = X.
\]
Then we denote by $\sub(X, W, i)$ the set of such integers $j$. We say $X$ \emph{crosses} the production $W\to U\cdot V$, if 
\[
X = U[-i_1:]\cdot V[:i_2]
\]
for some $i_1\geq 0$, $i_2> 0$. The following is Lemma 1 in \cite{lifshits_07}.

\begin{lma}
\label{crossing_appearances}
There is a polynomial-time algorithm that, given as input non-terminals $X$ and $W$ and an integer $i\leq \abs{\word{W}}$, computes integers $j, k, l$ such that $\sub(X, W, i) = \ap(j, k, l)$.
\end{lma}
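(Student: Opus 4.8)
The plan is to combine a combinatorial structure result coming from the periodicity lemma of Fine and Wilf with a recursive descent through the derivation tree of $W$. First I would normalise the input: using the quadratic-time conversion of \cite{hagenah_00} I may assume that $X$ and $W$ are genuine SLPs, and by introducing polynomially many auxiliary non-terminals I may further assume that every production is either a single terminal or a concatenation $W'\to U\cdot V$ of two non-terminals. This keeps the number of non-terminals polynomial and turns the derivation tree of $W$ into a binary tree of height $O(n)$. The key structural observation is that every $j\in \sub(X, W, i)$ satisfies $i - \abs{\word{X}} < j\leq i$, so all of these start positions lie in a half-open interval of length $\abs{\word{X}}$. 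Hence any two occurrences counted by $\sub(X, W, i)$ overlap, which forces the occupied region of $W$ to be periodic with period the minimal gap $d$ between two such occurrences; by Fine and Wilf the whole of $\sub(X, W, i)$ is then a single arithmetic progression with common difference $d$. This already shows that the required output format $\ap(j, k, l)$ is attainable, and it reduces the task to computing the three parameters in polynomial time.

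For the computation I would descend the binary derivation tree from the root $W$ towards the leaf containing position $i$. At a node with production $W'\to U\cdot V$ I split the occurrences of $X$ (inside $W'$) that cover the relevant copy of $i$ into those contained entirely in the child containing $i$, which are handled by recursing into that child, and those crossing the boundary between $U$ and $V$, that is, the $j$ for which
\[
X = U[-i_1:]\cdot V[:i_2], \qquad i_1 + i_2 = \abs{\word{X}}.
\]
A short parse-tree argument shows that each occurrence of $X$ covering $i$ crosses a \emph{unique} production, namely the lowest node whose yield contains the whole occurrence, and that this node lies on the root-to-$i$ path. Consequently $\sub(X, W, i)$ is the disjoint union, over the $O(n)$ productions on this path, of the boundary-crossing occurrences at each node that additionally cover $i$.

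The main obstacle is this last subproblem: for a single production $W'\to U\cdot V$, compute the boundary-crossing occurrences of $X$ that cover $i$. This amounts to finding the lengths $i_1$ for which $X[:i_1]$ is a suffix of $U$ and $X[i_1:]$ is a prefix of $V$. Because we only retain the occurrences covering $i$, their start positions again lie in a window of length $\abs{\word{X}}$, so by Fine and Wilf they form a single arithmetic progression; the point is to extract its least element, difference, and length. The admissible overlap lengths are governed by the periodic (border) structure of $X$ against $U$ and against $V$, and this structure is exactly what the maximal periodic factorisations of Proposition \ref{SLP_left_right} encode, while the individual suffix/prefix matches reduce to compressed longest-common-prefix queries, computable in polynomial time by \cite{gasieniec_96}. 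Intersecting the two resulting progressions of admissible $i_1$ with the covering constraint $j\leq i$ (respectively $j > i - \abs{\word{X}}$) keeps the answer a single arithmetic progression.

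Finally I would assemble the pieces. Each of the $O(n)$ nodes on the path contributes an explicitly described arithmetic progression in polynomial time, and their union is, by the global Fine–Wilf argument of the first paragraph, a single arithmetic progression. From this union I read off its least element as $l$, its common difference as $j$ (the gap from the least to the next element), and its cardinality as $k+1$, each in polynomial time, yielding the desired triple with $\sub(X, W, i) = \ap(j, k, l)$.
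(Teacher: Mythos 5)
First, note that the paper does not prove this lemma at all: it is quoted verbatim as Lemma~1 of \cite{lifshits_07}, so the ``proof'' in the paper is a citation. Your sketch is therefore being measured against Lifshits' argument, and in outline you have rediscovered its skeleton correctly: all occurrences covering position $i$ start in a window of length $\abs{\word{X}}$, the Fine--Wilf periodicity lemma (applied to triples of occurrences, whose pairwise gaps sum to less than $\abs{\word{X}}$) forces this set to be a single arithmetic progression, and the occurrences decompose, along the root-to-$i$ path in the (binarised) derivation tree, into the boundary-crossing occurrences at each of the $O(n)$ productions on that path. All of this is sound, modulo the fact that the periodicity argument needs three occurrences rather than two to conclude that the common difference is the minimal gap.

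The genuine gap is the step you describe as ``the main obstacle'' and then dispatch in three sentences: computing, for a single production $W'\to U\cdot V$, the arithmetic progression of overlap lengths $i_1$ with $X[:i_1]$ a suffix of $\word{U}$ and $X[i_1:]$ a prefix of $\word{V}$. This subproblem \emph{is} fully compressed pattern matching; it is the entire content of the cited lemma, not a corollary of the tools you invoke. Proposition~\ref{SLP_left_right} computes maximal periodic ($W$-power) factorisations of a prefix or suffix of $X$, which is a different structure from the set of suffix-of-$U$/prefix-of-$X$ match lengths, and the compressed longest-common-prefix queries of \cite{gasieniec_96} only let you \emph{test} a single candidate $i_1$ in polynomial time --- there are up to $\abs{\word{X}}$, hence exponentially many, candidates, and knowing a priori that the valid ones form an arithmetic progression does not by itself tell you how to locate its first element or its difference without further search. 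Lifshits closes exactly this gap with a bottom-up dynamic programme computing the crossing-occurrence progressions simultaneously for \emph{all pairs} of non-terminals of $X$ and $W$, recursing on the pattern $X$ as well as on the text; your scheme only ever recurses on $W$, so the base of your recursion is left unsupported. As written, the argument establishes the output format but not the polynomial-time computability, which is the substance of the lemma.
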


\subsection{Compressed automata}

A \emph{compressed non-deterministic finite state automaton}, or CNFA, is a tuple $\mathcal{A} = (Q, \Sigma, \mathcal{X}, \delta, \mathcal{P}, q_0, F)$, where:
\begin{enumerate}
\item $Q$ is a finite set of \emph{states},
\item $\Sigma$ is a finite alphabet of terminal letters,
\item $\mathcal{X} = \{X_1, \ldots, X_n\}$ is a finite alphabet of non-terminal letters,
\item $\delta\in Q\times\mathcal{X}\times Q$ is the set of \emph{transitions},
\item $\mathcal{P} = \{X_i\to W_i\}_{i=1}^n$ is the set of \emph{production rules} where $W_i\in (\Sigma\sqcup \{X_1, \ldots, X_{i-1}\})^*$,
\item $q_0\in Q$ is the \emph{initial state},
\item $F\subset Q$ is the set of \emph{final states}.
\end{enumerate}
A CNFA $\mathcal{A}$ is a \emph{compressed deterministic finite state automaton}, or CDFA, if for each pair of transitions $\alpha = (p, X, q)$, $\beta = (p, Y, r)\in \delta$, $X[0] = Y[0]$ implies that $\alpha = \beta$. Often we will allow our productions to also contain truncated non-terminals. This only affects the run-time of any algorithm presented by a quadratic factor.

A word $w \in \Sigma^*$ is \emph{accepted} by $\mathcal{A}$ if there is a sequence of states $q_0, q_1, \ldots, q_k$ such that $(q_{i-1}, X_i, q_i)\in \delta$ for all $1\leq i\leq k$ and $w = \word{X_1}\cdot \word{X_2}\cdot \ldots \word{X_k}$. The language $L(\mathcal{A})$ of $\mathcal{A}$ is the set of all words accepted by $\mathcal{A}$.

We will say that a non-terminal $X$ \emph{determines a path from $\alpha(i)$ to $\beta(j)$} in $\mathcal{A}$, where $\alpha$, $\beta\in \delta$, if there is a sequence of transitions $\alpha = (p_0, X_1, p_1), (p_1, X_2, p_2), \ldots, (p_{n-1}, X_n, p_n) = \beta$ such that
\[ 
\word{X} = \word{X_1}[i:]\cdot \word{X_2}\cdot \ldots \cdot \word{X_n[:j]}.
\]

The key result we will be using throughout this article is the main result from \cite{jez_12}:

\begin{thm}
\label{jez_compressed}
The fully compressed membership problem for CDFA (respectively CNFA) is P-complete (respectively NP-complete).
\end{thm}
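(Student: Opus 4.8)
The plan is to prove both upper bounds by \emph{recompression} and the matching lower bounds by reduction. The central idea is to never decompress $w$ or the transition labels of $\mathcal{A}$; instead we simultaneously shrink the input word (kept as an SLP) and the automaton by repeatedly applying two alphabet-reducing operations, maintaining the invariant that the modified automaton accepts the modified word if and only if the original automaton accepts the original word. The two operations are \emph{block compression}, which replaces every maximal run $a^{\ell}$ of a fixed letter $a$ by a single fresh letter $a_{\ell}$, and \emph{pair compression}, which fixes a partition $\Sigma = \Sigma_{L} \sqcup \Sigma_{R}$ and replaces every occurrence of a factor $ab$ with $a \in \Sigma_{L}$, $b \in \Sigma_{R}$ by a fresh letter. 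One phase consists of block-compressing every letter and then pair-compressing against a suitable partition; a standard counting argument shows that one phase shortens $\word{w}$ by a constant factor, so after $O(\log\abs{\word{w}})$, hence polynomially many, phases the word is a single letter and membership is decided by inspection.

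The work is in applying each operation to the automaton. For a transition labelled by a non-terminal $X$, the letters strictly inside $\word{X}$ can be compressed directly inside the SLP, so the difficulty is entirely with \emph{crossing} pairs and blocks: a factor $ab$ (or a block $a^{\ell}$) that straddles the boundary between two consecutive transitions of a run, or between the two halves of a production $Y \to U \cdot V$. These are handled by \emph{popping}: we locally decompress the first and last letter of each transition label and of each relevant non-terminal, after which every pair and block to be compressed lies wholly inside a single label and the compression becomes a purely local substitution in the SLP. Blocks need extra care because $\ell$ may be exponential; since the positions of the crossing appearances are recovered as an arithmetic progression via Lemma~\ref{crossing_appearances}, the exponent $\ell$ and the fresh letter $a_{\ell}$ can nevertheless be produced in polynomial time.

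The main obstacle, and the heart of the argument, is bounding the total size across all phases. Each round of popping adds new transitions and new production rules, and a naive accounting lets the automaton grow by a constant factor per phase, which over $O(\log\abs{\word{w}})$ phases would be fatal. The key is a charging argument: the letters and transitions introduced in a phase are paid for by letters and transitions that the compression simultaneously \emph{removes} in the same phase, so the SLP and the automaton both stay of polynomial size throughout. For a \textbf{CDFA} determinism keeps the out-degree per letter equal to one, which is exactly what the charging needs; block exponents are resolved by computing the $\ell$-th iterate of the (partial) transition function from its functional-graph structure in polynomial time; the entire procedure is deterministic and polynomial, giving membership in $\mathrm{P}$. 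For a \textbf{CNFA} the out-degree is unbounded and distinct runs may factor $w$ in incompatible ways, so the same charging no longer prevents blow-up; instead we retain only the transitions used by a single accepting run, guessed nondeterministically in compressed form, and verify that run with the deterministic machinery above, placing the problem in $\mathrm{NP}$.

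Finally, the lower bounds. $\mathrm{P}$-hardness of the \textbf{CDFA} problem follows by reducing from a $\mathrm{P}$-complete problem such as the monotone circuit value problem, encoding the sequential evaluation of the circuit in the nested productions of an SLP and reading off the output bit by membership in a fixed two-state deterministic automaton. $\mathrm{NP}$-hardness of the \textbf{CNFA} problem follows by exploiting nondeterminism to guess a certificate: one reduces from a classical $\mathrm{NP}$-complete problem such as \textsc{Subset Sum}, using the exponential succinctness of the SLP to write down the binary-encoded numerical data compactly, so that an accepting run of the CNFA corresponds exactly to a valid witness. Combining the upper and lower bounds yields $\mathrm{P}$-completeness and $\mathrm{NP}$-completeness respectively. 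The step I expect to be hardest is the size bound of the third paragraph, where the interplay between popping, compression, and the automaton's transition structure must be controlled uniformly across all phases.
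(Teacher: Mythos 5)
This theorem is not proved in the paper at all: it is imported verbatim as the main result of \cite{jez_12}, so there is no internal argument to compare against. Your sketch correctly reproduces the strategy of that reference --- Je\.{z}'s recompression technique (block and pair compression with popping of crossing occurrences, determinism driving the size-charging for the P upper bound, nondeterministic guessing for the NP upper bound, and the standard circuit-value and \textsc{Subset Sum} reductions for hardness) --- though, as you note yourself, the charging argument that keeps the SLP and automaton polynomially bounded across all $O(\log\abs{\word{w}})$ phases is the substantive content and is only gestured at here rather than carried out.
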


An \emph{involutive CNFA} is a CNFA $\mathcal{A} = (Q, \Sigma\sqcup\Sigma^{-1}, \mathcal{X}, \delta, \mathcal{P}, q_0, F)$ equipped with an involution $^{-1}:\delta\to \delta$ such that if $(p, X, q)^{-1} = (q, Y, p)$ then $\word{X} = \word{Y}^{-1}$. We say $\mathcal{A}$ is an \emph{involutive CDFA} if it is a CDFA and if $\word{X}$ is a freely reduced word for each $X\in \mathcal{X}$. Denote by $\partial \mathcal{A}\subset Q$ the set of states with only one incoming and one outgoing transition.

A subset $H\subset F(\Sigma)$ of a free group is a \emph{rational subset} if there exists a CNFA $\mathcal{A}$ over $\Sigma\sqcup \Sigma^{-1}$ satisfying $H = \pi(L(\mathcal{A}))$. An important class of rational subsets of free groups are their finitely generated subgroups. The well developed theory of \emph{Stallings automata} allows us to solve the membership problem for subgroups efficiently \cite{sta_83,kapovich_02}. However, for our context, we will need to define a compressed analog. 

\begin{defn}
A \emph{compressed Stallings automaton} for a subgroup $H<F(\Sigma)$ is an involutive CDFA $\mathcal{A}$ such that $(\red\circ\pi)(L(\mathcal{A})) = \red(H)$ and such that the number of states and transitions are minimal.
\end{defn}

\section{Language intersections for CDFA}

Given two CDFAs $\mathcal{A}$ and $\mathcal{B}$, in this section we aim to understand what questions we can answer about $L(\mathcal{A})\cap L(\mathcal{B})$ in polynomial time. Given an SLP $\mathbb{X}$, we may decide if $\word{\mathbb{X}}\in L(\mathcal{A})\cap L(\mathcal{B})$ in polynomial time by Theorem \ref{jez_compressed}. However, deciding whether there exists a non-trivial element $w\in L(\mathcal{A})\cap L(\mathcal{B})$ in polynomial time requires more work. We call this problem the \emph{CDFA non-emptiness intersection problem} and show that it is in NP:

\begin{thm}
\label{compressed_nei}
The CDFA non-emptiness intersection problem for two CDFA is in NP.
\end{thm}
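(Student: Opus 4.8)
The plan is to exhibit a polynomial-size certificate together with a polynomial-time verifier. A common word $w \in L(\mathcal{A}) \cap L(\mathcal{B})$ is witnessed by a path $p_0, \ldots, p_a$ in $\mathcal{A}$ spelling $\word{X_1}\cdot \ldots \cdot \word{X_a}$ and a path $r_0, \ldots, r_b$ in $\mathcal{B}$ spelling $\word{Y_1}\cdot \ldots \cdot \word{Y_b}$, with the two spelled words equal. Since the labels are compressed, $w$, and hence both paths, may have length exponential in the input, so the certificate cannot be $w$ itself. Instead, the certificate will be an SLP $\mathbb{Z}$ with $\word{\mathbb{Z}} = w$, of size polynomial in the sizes of $\mathcal{A}$ and $\mathcal{B}$. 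Given such a $\mathbb{Z}$, the verifier checks $\word{\mathbb{Z}} \in L(\mathcal{A})$ and $\word{\mathbb{Z}} \in L(\mathcal{B})$, each in polynomial time by Theorem \ref{jez_compressed}. Thus the entire content of the theorem is the existence, whenever the intersection is nonempty, of a common word admitting a polynomial-size SLP.

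To prove this, I would analyse the \emph{synchronised overlay} of the two witnessing paths. Superimposing the endpoints of the transitions $X_1, \ldots, X_a$ and $Y_1, \ldots, Y_b$ partitions $[0, \abs{w}]$ into consecutive blocks, each of which lies inside a single transition of $\mathcal{A}$ and a single transition of $\mathcal{B}$. Recording for each block the pair of transitions it sits in yields a walk in the product graph on the $\abs{\delta_{\mathcal{A}}}\cdot \abs{\delta_{\mathcal{B}}}$ pairs of transitions, while the offsets into the two current labels evolve deterministically from block to block. The number of transition-pairs is polynomial, so a long walk must revisit configurations, and the crux is to show that repeated configurations force periodicity: a stretch during which a single long label $\word{X}$ is traversed while a cycle of $\mathcal{B}$ repeats forces $\word{X}$ to be locally periodic, with the admissible alignment offsets forming an arithmetic progression governed by Lemma \ref{crossing_appearances}. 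Such a periodic stretch can then be folded and recorded compactly by its period, its repetition count in binary, and the offset data supplied by Proposition \ref{SLP_left_right}.

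Consequently I would argue that a shortest common word decomposes into a polynomial number of phases, each either a single block or one maximal periodic stretch, so that $\abs{w}$ is at most singly exponential in the input and $w$ admits an SLP of polynomial size, built phase by phase from the label non-terminals together with the powering implicit in each periodic stretch. This SLP is the certificate, and guessing it (with its phase structure) and verifying membership in both languages by Theorem \ref{jez_compressed} places the problem in NP. The main obstacle is the periodicity analysis: one must prove that whenever the overlay walk repeats a transition-pair the overlapping region is genuinely periodic, and that the resulting offsets are captured by the arithmetic-progression description of Lemma \ref{crossing_appearances}, since it is this step that simultaneously bounds the length of a minimal witness and allows the exponentially long repetitive portions of $w$ to be compressed into a polynomial-size SLP.
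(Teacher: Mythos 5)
Your overall strategy is exactly the paper's: the certificate is a polynomial-size SLP $\mathbb{Z}$ for a common word, verified in both languages by Theorem \ref{jez_compressed}, so the entire content is the existence of a polynomially compressible witness; and you propose to get this by analysing the product of the two automata and decomposing a minimal witness into polynomially many pieces, each either a short block or a periodic stretch. The paper does the same, working with the fibre product $\Theta = \rcore(\Gamma\times_{\Delta}\Lambda)$ of the decompressed labelled graphs and a minimal accepting path $g = g_1*\ldots*g_n$ through it.

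The gap is that the step you yourself flag as ``the main obstacle'' is the whole theorem, and the version of it you state is not true as stated. Revisiting a transition-pair in the overlay walk does \emph{not} by itself force the intervening stretch of $w$ to be periodic: between the two visits the path can leave both labels and wander arbitrarily, and nothing constrains the word spelled in between. Periodicity only arises in the specific configuration where one automaton traverses a single long label $\word{X}$ continuously while the other runs around a cycle shorter than $\abs{\word{X}}$ (a Fine--Wilf-type situation), and even then one needs minimality of the witness to bound (a) the number of maximal such periodic stretches, (b) the exponent of each periodic stretch, and (c) the number of truncated non-terminals needed to describe each non-periodic segment. These three quantitative bounds are precisely what the paper imports from \cite{me_21}: Lemma 3.7 there bounds the number of segments by $4\cdot(\abs{\bar{E}(\Gamma)}\cdot\abs{\bar{E}(\Lambda)}+1)$, Theorem 5.10 bounds the description of each non-periodic segment by $1008\cdot(\abs{\bar{E}(\Gamma)}\cdot\abs{\bar{E}(\Lambda)})^2$ truncated non-terminals, and Lemma 4.18 bounds the exponent of each periodic segment by $\abs{E(\Gamma)}\cdot\abs{E(\Lambda)}$. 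Lemma \ref{crossing_appearances} and Proposition \ref{SLP_left_right}, which you invoke, only let you \emph{compute} with a periodic stretch once its existence and bounds are established; they do not supply the combinatorial decomposition. Without proving (or citing) those structural bounds on the fibre product, the claim that a shortest common word has a polynomial-size SLP remains unestablished, so the proposal is a correct plan with its central lemma missing.
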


\begin{proof}
Let $\mathcal{A} = (Q_A, \Sigma, \mathcal{X}_A, \delta_A, \mathcal{P}_A, q_A, F_A)$ and $\mathcal{B} = (Q_B, \Sigma, \mathcal{X}_B, \delta_B, \mathcal{P}_B, q_B, F_B)$ be two CDFAs. After performing minor modifications to $\mathcal{A}$ and $\mathcal{B}$, we may assume that $q_A$ and $q_B$ don't have any incoming transitions and that each state in $F_A$ and $F_B$ doesn't have any outgoing transitions. We may further assume that there are no states with exactly one incoming transition and exactly one outgoing transition.

The proof will be based off the results of \cite{me_21} so we adopt the same notation. Let $\Delta$ be a graph with a single vertex and $E(\Delta) = \Sigma$. Then $\mathcal{A}$ and $\mathcal{B}$ correspond to marked directed graph maps $\gamma:\Gamma\to \Delta$ and $\lambda:\Lambda\to \Delta$ as follows: each vertex in $\Gamma$ and $\Lambda$ corresponds to a state in the decompressed automata for $\mathcal{A}$ and $\mathcal{B}$ respectively. Each edge in $\Gamma$ and $\Lambda$ corresponds to a transition in the decompressed automata for $\mathcal{A}$ and $\mathcal{B}$ respectively. Each edge in $\Gamma$ and $\Lambda$ maps to the edge in $\Delta$ corresponding with the label of their associated transitions. If $\Gamma$ is a graph, then $\bar{V}(\Gamma)\subset V(\Gamma)$ is the set of vertices with either indegree or outdegree different to 1. Then $\bar{E}(\Gamma)$ are the minimal segments connecting these vertices. Thanks to our initial assumptions, we have a correspondence between elements in $\bar{E}(\Gamma)$ and elements of $\delta_A$, 
\[
e\to \alpha(e) = (p(e), X(e), q(e)). 
\]
Similarly for $\bar{E}(\Lambda)$ and $\delta_B$, 
\[
f\to \beta(f) = (r(f), Y(f), s(f)).
\]

Now let $\Theta = \rcore(\Gamma\times_{\Delta}\Lambda)$ and let $\theta:\Theta\to \Delta$ be the natural map. If $L(\mathcal{A})\cap L(\mathcal{B})\neq \emptyset$, then there is some path $g:I\to \Theta$ starting at $(q_A, q_B)$ and ending at some vertex in $F_A\times F_B$. We may assume that $g$ has minimal length among all such paths. Denote by $g = g_1*\ldots*g_n$ the factorisation of $g$ such that $g_i\in \bar{E}(\Theta)$ for all $i$. Let $s_{\Gamma}:\mathbb{S}_{\Gamma}\to \Gamma$ and $s_{\Lambda}:\mathbb{S}_{\Lambda}\to \Lambda$ be the maps from Theorem 4.19 in \cite{me_21} and recall that $\Theta^s = \Theta - s(\mathbb{S})$ where $s:\mathbb{S} = \core(\mathbb{S}_{\Gamma}\times_{\Delta}\mathbb{S}_{\Lambda})\to \Theta$ is the natural map. Then by definition of $\Theta^s$, either $g_i\subset \Theta^s$ or $g_i$ lifts to some cycle in $s:\mathbb{S}\to \Theta$. Furthermore, by Lemma 3.7 in \cite{me_21}, $n\leq 4\cdot (\abs{\bar{E}(\Gamma)}\cdot \abs{\bar{E}(\Lambda)} + 1)$. Let $i\leq n$, we have two cases to consider.

Firstly, suppose $g_i\subset \Theta^s$. By Theorem 5.10 in \cite{me_21}, there is a sequence of non-terminals $X(e_1), \ldots, X(e_k)$, $Y(f_1), \ldots, Y(f_k)$ and integers $a_1, b_1, c_1, d_1, \ldots, a_k, b_k, c_k, d_k$ such that:
\[
\theta\circ g_i = \word(X(e_1)[a_1:b_1]\cdot Y(f_1)[c_1:d_1]\cdot\ldots\cdot X(e_k)[a_k:b_k]*Y(f_k)[c_k:d_k])
\]
and $k\leq 1008\cdot \left(\abs{\bar{E}(\Gamma)}\cdot \abs{\bar{E}(\Lambda)}\right)^2$.

Now suppose $g_i$ lifts to some $S^1\subset \mathbb{S}$. Then by Lemma 4.18 in \cite{me_21}, there is some $e\in \bar{E}(\Gamma)$ and integers $a, b, c, d, k$ such that:
\[
\theta\circ g_i = \word(X(e)[c:b]\cdot X(e)[a:b]^k\cdot X(e)[a:d]).
\]
Furthermore, $k\leq \abs{E(\Gamma)}\cdot \abs{E(\Lambda)}$.

It follows that there is an SLP $\mathbb{W}$ such that $\word{\mathbb{W}} = \theta\circ g$ and such that the size of $\mathbb{W}$ is polynomially bounded by the sizes of the input. Hence we can non-deterministically guess $\mathbb{W}$ and check if $\word{\mathbb{W}}\in L(\mathcal{A})$ and $\word{\mathbb{W}}\in L(\mathcal{B})$ in polynomial time by Theorem \ref{jez_compressed}.
\end{proof}

In the proof of Theorem \ref{compressed_nei}, we actually show that given any two CDFA $\mathcal{A}$ and $\mathcal{B}$, if $L(\mathcal{A})\cap L(\mathcal{B})\neq \emptyset$, there exists an SLP $\mathbb{W}$ such that $\word{\mathbb{W}}\in L(\mathcal{A})\cap L(\mathcal{B})$ and such that the size of $\mathbb{W}$ is polynomially bounded in terms of the sizes of $\mathcal{A}$ and $\mathcal{B}$. However, the proof does not suggest any method of finding $\mathbb{W}$ deterministically. One case in which we may find such an SLP is when our input alphabets consist of only a single symbol. Indeed, using integer arithmetic we may even compute a CNFA $\mathcal{C}$ such that $L(\mathcal{C}) = L(\mathcal{A})\cap L(\mathcal{B})$.

\begin{prop}
\label{unary_alphabet_intersection}
There is a polynomial-time algorithm that, give as input CDFAs $\mathcal{A}$ and $\mathcal{B}$ over a unary alphabet $\Sigma = \{a\}$, computes integers $n_0, \ldots, n_k, n$ such that:
\[
L(\mathcal{A})\cap L(\mathcal{B}) = a^{n_0}\cdot (a^n)^*\cdot (a^{n_1} \mid \ldots \mid a^{n_k}).
\]
\end{prop}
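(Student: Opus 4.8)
The plan is to exploit the fact that over a unary alphabet a CDFA is maximally rigid. Since every non-terminal produces a non-empty word and the only letter is $a$, any two transitions out of a state $p$ have labels beginning with $a$, so the determinism condition forces them to coincide: every state has out-degree at most one. Hence the subgraph of (the decompression of) $\mathcal{A}$ reachable from $q_0$ is a lollipop, namely a simple directed path (the tail) that either ends in a sink or runs into a single directed cycle. First I would compute, for each transition $(p,X,q)$, the integer $\abs{\word{X}}$; although this integer may be exponential, it has polynomially many bits and is obtained by a single bottom-up pass over the production rules. Accumulating these lengths along the unique outgoing path assigns to each reachable state $q_j$ a binary integer $\ell_j$, the total label-length read from $q_0$ to $q_j$, and assigns to the cycle a period $P_A$ equal to the sum of the label-lengths around it.

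Reading off the language, $a^m\in L(\mathcal{A})$ precisely when $m=\ell_j$ for some final state $q_j$ on the tail, or $m\in \ell_i + P_A\cdot\N$ for some final state $q_i$ on the cycle. Thus, identifying $L(\mathcal{A})$ with a subset of $\N$, it is a finite set together with a finite union of arithmetic progressions all of modulus $P_A$, where the number of progressions is at most $\abs{Q_A}$ and every offset and modulus is a binary integer of polynomial size. I would compute the analogous data $\ell'_i$, $P_B$ for $\mathcal{B}$.

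Next I would intersect, distributing over the finitely many pieces. An intersection involving a finite set is handled by direct membership tests: to decide $m\in c+P\cdot\N$ one reduces $m$ modulo $P$ and compares binary integers. The substantive case is the intersection of two infinite progressions $(c_1+P_A\cdot\N)\cap(c_2+P_B\cdot\N)$, which I would resolve with the extended Euclidean algorithm. Writing $d=\gcd(P_A,P_B)$, the intersection is empty unless $c_1\equiv c_2 \pmod d$, and otherwise equals $c+n\cdot\N$ with $n=\lcm(P_A,P_B)$ and $c$ the least common solution that is $\geq \max(c_1,c_2)$, all obtainable from a B\'ezout computation. This stays polynomial because every quantity keeps polynomially many bits and $\gcd$, $\lcm$ and B\'ezout coefficients are computable in polynomial time.

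Finally I would assemble the result. The key point making this feasible is that each pairwise intersection of progressions already carries the common modulus $n=\lcm(P_A,P_B)$, so no unfolding into residue classes (which would be exponential) is ever needed: after collecting the at most $\abs{Q_A}\cdot\abs{Q_B}$ surviving progressions, each has the form $(n_0+n_j)+n\cdot\N$ for a single $n_0$, and factoring out $a^{n_0}$ while listing the residual offsets $n_1,\dots,n_k$ produces the displayed regular expression; the finitely many sporadic lengths coming from the tails are folded in as additional offsets, with the degenerate choice $n=0$ (so that $(a^n)^*=\{\epsilon\}$) covering the case of a finite intersection. I expect the main obstacle to be exactly this insistence on symbolic binary arithmetic: because the periods and offsets are exponentially large, one can never enumerate residues modulo $n$ or expand a progression into singletons, so the whole argument — from computing label-lengths, through the progression intersection, to the common-modulus normalisation — must be carried out with $\gcd$/$\lcm$/CRT, and one must verify that combining the pieces never inflates their number beyond the quadratic bound.
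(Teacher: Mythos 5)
Your argument is essentially the paper's: both exploit that a unary CDFA is a path with at most one cycle attached, compute all label lengths as binary integers via a bottom-up pass over the productions, write each language as a finite set plus arithmetic progressions of a single modulus, and intersect by solving linear congruences in polynomial time. The only difference is one of detail — the paper compresses your extended-Euclid/CRT analysis into the remark that the intersection "involves solving some systems of linear equations," so your write-up simply makes explicit what the paper leaves implicit.
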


\begin{proof}
A CDFA over a unary alphabet must have at most one outgoing transition from each state and thus can only take one of the following forms: a single state, a segment, a cycle or a segment with a cycle attached at the end. By Lemma 2.7 in \cite{saul_08} we may compute for each SLP $\mathbb{X}$ over $\{a\}$ an integer $p$ such that $\word{\mathbb{X}} = a^p$. Thus, it is not hard to see that we may compute in polynomial time integers $p_0, \ldots, p_m, p$ and $q_0, \ldots, q_n, q$ such that:
\begin{align*}
L(\mathcal{A}) &= a^{p_0}\cdot (a^p)^*\cdot (a^{p_1} \mid \ldots \mid a^{p_m}),\\
L(\mathcal{B}) &= a^{q_0}\cdot (a^q)^*\cdot (a^{q_1} \mid \ldots \mid a^{q_n}),
\end{align*}
where $p_1, \ldots, p_m<p$ and $q_1, \ldots, q_n<q$. Now computing the integers $n_1, \ldots, n_k, n$ involves solving some systems of linear equations which may be done in polynomial time.
\end{proof}

We show one more case in which we may find $\mathbb{W}$ deterministically. Just like the unary language case, regular sublanguages of languages of the form $u\cdot(v)^*$, where $u, v\in \Sigma^*$, have very convenient representations using integers, along with our original words $u$ and $v$. We show that if $L(\mathcal{B})$ is of this form, then we may compute a CDFA $\mathcal{C}$ such that $L(\mathcal{C}) = L(\mathcal{A})\cap L(\mathcal{B})$. First, we shall need a Lemma.

\begin{lma}
\label{power_pump}
Let $\mathcal{A}$ be a CNFA and let $u, v\in\Sigma^*$ such that $\abs{v}>\abs{\word{X}}$ for all $X\in \mathcal{X}$. If $L(\mathcal{A})\cap u\cdot (v)^+\neq \emptyset$, then $u\cdot v^n\in L(\mathcal{A})$ for some $n\leq 4\cdot \abs{\delta} + 1$. If $\mathcal{A}$ is a CDFA, then $n\leq 2\cdot \abs{\delta} + 1$.
\end{lma}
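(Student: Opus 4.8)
The plan is to argue by a pumping argument on the accepting path in $\mathcal{A}$, exploiting the periodic structure of the target language $u\cdot(v)^+$ to bound how many copies of $v$ we need. Suppose $u\cdot v^m\in L(\mathcal{A})$ for some $m$, witnessed by a sequence of transitions determining a path from the initial state to a final state. Since $\abs{v}>\abs{\word{X}}$ for every non-terminal $X\in \mathcal{X}$, no single transition of $\mathcal{A}$ can span a full copy of $v$; consequently, as we read the repeated block $v^m$, we may associate to each of the boundaries between consecutive copies of $v$ (or rather, to each position that is a multiple of $\abs{v}$ past $\abs{u}$) a state of $\mathcal{A}$ together with a small amount of ``offset'' data recording how far into the transition currently being read that position lies.

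First I would make this offset data precise: to a position $p = \abs{u} + t\cdot\abs{v}$ I associate the pair consisting of the transition $\alpha_t = (p_t, X_t, q_t)\in \delta$ whose label straddles position $p$, together with the index $i_t$ such that $\word{X_t}[i_t]$ sits at position $p$. Because $\abs{v}$ exceeds every $\abs{\word{X}}$, the letters read strictly between two consecutive marked positions already force us to traverse at least one full transition, so the combinatorial type $(\alpha_t, i_t)$ together with the periodicity of $v$ completely determines how the path continues: two marked positions with the same type give a sub-path reading an integer number of copies of $v$ that can be excised or repeated. The number of distinct types $(\alpha, i)$ is what controls the bound; a priori $i$ ranges over $\{0, \ldots, \abs{\word{X}}-1\}$, which is exponential, so the crucial observation is that the \emph{relative} offset $i_t - i_{t+1} \pmod{\abs{\word{X}}}$ is governed by $\abs{v}\bmod\abs{\word{X}}$ and is hence eventually periodic, allowing us to collapse the relevant type information to something of size $O(\abs{\delta})$.

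Once two marked positions $t < t'$ share the same collapsed type, the segment of the path between them reads exactly $v^{t'-t}$ and returns $\mathcal{A}$ to a matching configuration, so we may delete this segment to obtain a shorter accepting path for $u\cdot v^{m-(t'-t)}$, or we may iterate to climb back up. By a standard pigeonhole argument on the sequence of marked types, a repetition must occur within the first $O(\abs{\delta})$ markers, which yields the existence of some accepting $n$ bounded linearly in $\abs{\delta}$. To obtain the explicit constants $4\cdot\abs{\delta}+1$ in the non-deterministic case and the sharper $2\cdot\abs{\delta}+1$ in the deterministic case, I would count the number of admissible collapsed types carefully: in the deterministic setting each state of $\mathcal{A}$ has at most one outgoing transition consistent with the next letter of $v$, which halves the number of reachable configurations and so halves the pumping bound.

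\textbf{Main obstacle.} The delicate point is the offset bookkeeping. Unlike the uncompressed case, a transition label $\word{X}$ may be exponentially long, so naively the ``phase'' $i_t$ within the straddling transition has exponentially many values and a direct pigeonhole on $(\alpha_t, i_t)$ gives only an exponential bound. The real work is to show that the phase evolves along an arithmetic progression modulo $\abs{\word{X}}$ determined by $\abs{v}$, so that returning to the same transition $\alpha$ forces the phase to belong to a controlled residue class; this is exactly the kind of arithmetic-progression structure captured by Lemma~\ref{crossing_appearances} and $\sub(X, W, i)$, and I expect to invoke that machinery (or a direct congruence computation via Lemma 2.7 of \cite{saul_08} as in Proposition~\ref{unary_alphabet_intersection}) to reduce the count of genuinely distinct configurations to a linear function of $\abs{\delta}$.
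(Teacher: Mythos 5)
Your high-level plan --- pigeonhole on the configuration (straddling transition plus offset) at the $\abs{v}$-period boundaries, then excise a loop --- is the natural first attempt, and you correctly isolate the obstruction: the offset inside a compressed label has exponentially many possible values. But the step you propose to overcome this does not work as described, and it is precisely where all the content of the lemma lies. To excise the subpath between two marked positions you need them to carry \emph{identical} configurations $(\alpha_t, i_t)$ --- the same transition \emph{and} the same offset --- since only then is the intervening subpath a genuine loop in the decompressed automaton. Collapsing configurations into $O(\abs{\delta})$ residue classes destroys exactly this: two marked positions in the same class but with different offsets begin and end at different points of the automaton, so nothing can be cut out. Conversely, pigeonholing on exact configurations only returns the exponential bound you started with. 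Moreover, the congruence you invoke is not well defined: consecutive marked positions generally straddle \emph{different} transitions with different label lengths, and the offset at the next marked position depends on the entire sequence of transitions traversed in between, not on $\abs{v}\bmod\abs{\word{X}}$. Lemma~\ref{crossing_appearances} describes the set of occurrences of one fixed non-terminal inside another; it says nothing about how this offset evolves along an accepting path.

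What is actually needed is a structural argument about the periodic word $v$: for instance, in the deterministic case the orbit $t\mapsto(\alpha_t, i_t)$ is eventually periodic, its cycle corresponds to a closed path in the decompressed graph reading a power of $v$, and one must exploit the Fine--Wilf-type interaction between $v$ and the (shorter) transition labels to bound the tail and the period in terms of the number of transitions. This is where the hypothesis $\abs{v}>\abs{\word{X}}$ and the distinction between immersions and forwards immersions (hence the constants $4\cdot\abs{\delta}+1$ versus $2\cdot\abs{\delta}+1$) genuinely enter; your remark that ``determinism halves the number of reachable configurations'' is a restatement of the desired conclusion, not an argument for it. The paper does none of this bookkeeping itself: it passes to the decompressed directed graph map $\gamma\colon\Gamma\to\Delta$, observes that determinism makes $\gamma$ a forwards immersion, and quotes Lemma~4.12 of \cite{me_21}, which is exactly the missing combinatorial statement (note also the remark following the lemma that compression plays no role, since $\abs{\delta}$ counts segments of $\Gamma$ rather than letters). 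As it stands, your proposal has a genuine gap at its crux.
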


\begin{proof}
Let $\Delta$ be a graph with a single vertex and $E(\Delta) = \Sigma$. Then just as in the proof of Theorem \ref{compressed_nei}, $\mathcal{A}$ determines a directed graph map $\gamma:\Gamma\to \Delta$. Recall from \cite{me_21} that $\gamma$ is a forwards immersion if for each vertex $V(\Gamma)$ and for each pair of outgoing edges $e, f\in E(\Gamma)$, we have $\gamma(e)\neq \gamma(f)$. Hence, if $\mathcal{A}$ is deterministic, then $\gamma$ is a forwards immersion. Now the result follows from Lemma 4.12 in \cite{me_21}.
\end{proof}

Note that the proof of Lemma \ref{power_pump} does not use the fact that $\mathcal{A}$ has compressed transition labels.

\begin{thm}
\label{intersection}
There exists a polynomial-time algorithm that, given as input a CDFA $\mathcal{A}$ and SLPs $\mathbb{U}$ and $\mathbb{V}$, computes a collection of integers $\{n_0, n_1, \ldots, n_k, n\}$ with $k \leq |F|$ such that
\[
L(\mathcal{A})\cap \word{\mathbb{U}}\cdot (\word{\mathbb{V}})^* = \word{\mathbb{U}}\cdot \word{\mathbb{V}}^{n_0}\cdot(\word{\mathbb{V}}^n)^*\cdot (\word{\mathbb{V}}^{n_1} \mid \ldots \mid \word{\mathbb{V}}^{n_k}).
\]
\end{thm}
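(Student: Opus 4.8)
The plan is to analyse the run of the deterministic automaton $\mathcal{A}$ on the one-sided infinite word $\word{\mathbb{U}}\cdot\word{\mathbb{V}}^\infty$ and to read off from it the set $S=\{m\ge 0:\word{\mathbb{U}}\cdot\word{\mathbb{V}}^m\in L(\mathcal{A})\}$. Since $\mathcal{A}$ is a CDFA, reading a prefix determines at most one run, so for each $m$ there is a well-defined \emph{configuration} $\Phi(m)$ recording the transition $\alpha=(p,X,q)\in\delta$ currently being traversed together with the offset $0\le j<\abs{\word{X}}$ into its label; a configuration with $j=0$ sits exactly at a state of $\mathcal{A}$, and $\word{\mathbb{U}}\cdot\word{\mathbb{V}}^m\in L(\mathcal{A})$ precisely when $\Phi(m)$ is \emph{accepting}, i.e.\ $j=0$ and $p\in F$. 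The first step is to compute the entry configuration $\Phi(0)$ obtained after reading $\word{\mathbb{U}}$: for each state $q$ one tests, using Theorem \ref{jez_compressed}, whether $\word{\mathbb{U}}$ labels a path from $q_0$ to $q$, and when $\word{\mathbb{U}}$ ends inside a transition one locates that transition and its offset using the compressed factorisation and pattern-matching tools underlying Proposition \ref{SLP_left_right}.

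The conceptual core is that $\Phi$ is eventually periodic in $m$: the future of the run depends only on the current configuration and on the suffix of $\word{\mathbb{V}}^\infty$ still to be read, and the latter is determined by the offset modulo $\abs{\word{\mathbb{V}}}$; hence $\Phi(m)$ determines $\Phi(m+1)$, and as there are only finitely many configurations the sequence is eventually periodic, say with pre-period $n_0$ and minimal period $n$. Because $n$ is \emph{minimal}, each configuration — in particular each accepting one — recurs at most once per period, and since there are at most $\abs{F}$ accepting configurations (one per final state, the offset being forced to $0$), the accepting values of $m$ beyond $n_0$ fall into at most $\abs{F}$ residue classes modulo the single period $n$. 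This is exactly the shape $\word{\mathbb{U}}\cdot\word{\mathbb{V}}^{n_0}\cdot(\word{\mathbb{V}}^n)^*\cdot(\word{\mathbb{V}}^{n_1}\mid\ldots\mid\word{\mathbb{V}}^{n_k})$ with $k\le\abs{F}$. When $\word{\mathbb{V}}$ is longer than every label $\word{X}$, Lemma \ref{power_pump} applies directly and both decides emptiness and bounds $n_0$, so this regime is essentially soft.

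The delicate point — and the main obstacle — is that $n$ and the $n_i$ can be exponentially large \emph{in value}, so although they have polynomially many bits they cannot be found by iterating $m\mapsto\Phi(m)$; this already occurs when a single compressed label $\word{X}$ is far longer than $\word{\mathbb{V}}$, so that one period of the run consumes exponentially many copies of $\word{\mathbb{V}}$. The computation must therefore be arithmetic rather than simulated, and this is precisely where $\word{\mathbb{V}}$ being short relative to the labels makes the problem hard. Here Lemma \ref{crossing_appearances} is decisive: the positions at which a given transition label, or the block $\word{\mathbb{V}}$, aligns inside the periodic word $\word{\mathbb{V}}^\infty$ form an arithmetic progression $\ap(j,k,l)$ computable in polynomial time, so the condition that the run is at a final state $f$ at a position of the form $\abs{\word{\mathbb{U}}}+m\abs{\word{\mathbb{V}}}$ turns into a system of linear congruences in $m$.

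Solving this system is a collection of linear Diophantine / CRT problems of exactly the flavour already handled in Proposition \ref{unary_alphabet_intersection}, and doing so yields $n$ and the residues $n_1,\ldots,n_k$ directly in binary. The remaining work is bookkeeping: assembling the polynomially many progressions contributed by the individual final states into a single output with common difference $n$, and checking the finitely many values $m\le n_0$ against $L(\mathcal{A})$ via Theorem \ref{jez_compressed}. I expect the genuine difficulty to be concentrated entirely in this arithmetic extraction of $n$ and the $n_i$ through Lemma \ref{crossing_appearances} in the short-$\word{\mathbb{V}}$ regime; the eventual periodicity and the bound $k\le\abs{F}$, by contrast, are soft consequences of determinism together with minimality of the period.
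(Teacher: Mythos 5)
Your high-level picture is sound and agrees with the shape of the paper's answer: the deterministic run on $\word{\mathbb{U}}\cdot\word{\mathbb{V}}^{\infty}$ is eventually periodic at the level of configurations, and your argument that the accepting indices beyond the pre-period fall into at most $\abs{F}$ residue classes modulo a single period is correct (two visits to the same state at positions $\equiv\abs{\word{\mathbb{U}}}\pmod{\abs{\word{\mathbb{V}}}}$ see the same remaining suffix, so they really are the same configuration). The problem is that everything you defer to ``arithmetic extraction'' is the entire content of the theorem, and the proposal supplies no mechanism for it. To write down a system of linear congruences in $m$ you must first know the combinatorial structure of the run --- which transitions are traversed, in what eventually periodic order, and how many copies of $\word{\mathbb{V}}$ each segment consumes. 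Lemma \ref{crossing_appearances} does not give you this: it locates occurrences of one fixed non-terminal inside another, whereas the sequence of transitions attempted is determined dynamically by the offsets, and a priori its combinatorial period (as a sequence of transition names) could be exponentially long, for instance if the run circles a short cycle exponentially many times before the offsets realign. Ruling such behaviour out is a theorem --- it is exactly what Lemma \ref{power_pump} (via the synchronisation properties of primitive words) provides --- not a consequence of eventual periodicity, and without it the step ``run $\mapsto$ congruences'' is unsupported.

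The paper closes precisely this gap with a construction you would need some version of. It first reduces to $\word{\mathbb{V}}$ primitive, computing the primitive root of $\mathbb{V}$ and absorbing a maximal power into $\mathbb{U}$; primitivity is what makes the alignment of $\word{\mathbb{V}}$ inside a long label unique, and your congruence plan silently relies on this but never performs the reduction. It then splits every transition label at its maximal right $\mathbb{V}$-factorisation, so that the only long transitions a $\mathbb{V}$-power path can traverse are exact powers $\mathbb{V}^k$, which are shadowed by unary edges $v^k$; Lemma \ref{power_pump} then bounds by $2\abs{\delta}+1$ the number of copies of $\word{\mathbb{V}}$ consumed by any stretch of short transitions, so those stretches are found by polynomially many calls to Theorem \ref{jez_compressed}. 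Only after this compression of the run into a polynomial-size unary automaton does the problem become the CRT computation you describe, handled by Proposition \ref{unary_alphabet_intersection}. You correctly locate where the difficulty lives, but locating it is not the same as resolving it; as written the proposal has a genuine gap at its central step. (You should also treat the case where the infinite run dies inside a transition, which truncates the accepted set.)
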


\begin{proof}
By Theorem 2 in \cite{lifshits_07}, we may compute an SLP $\mathbb{Y}$ and an integer $k\geq 1$ such that $\word{\mathbb{Y}}$ is primitive and $\mathbb{V} = \mathbb{Y}^k$ in polynomial time. We may compute in polynomial time the largest integer $l\geq 0$ such that
\[
\word{\mathbb{U}} = \word{\mathbb{U}}[:-\abs{\word{\mathbb{Y}^l}}]\cdot \word{\mathbb{Y}}^l
\]
by Theorem 2.9 in \cite{saul_08}.

Now, if we can compute:
\[
L(\mathcal{A})\cap \word{\mathbb{U}}[:-\abs{\word{\mathbb{Y}^l}}]\cdot (\word{\mathbb{Y}})^*
\]
in polynomial time, then we can compute $L(\mathcal{A})\cap \word{\mathbb{U}}\cdot (\word{\mathbb{V}})^*$ in polynomial time by Proposition \ref{unary_alphabet_intersection}. Hence, from now on we may assume that $l = 0$ and $k = 1$.

We now add auxiliary symbols to our alphabet, $u$ and $v$, and will modify our automaton $\mathcal{A}$ in five steps: 

\begin{description}
\item[Step 1\label{itm:step1}] For each transition $(p, X, q)\in \delta$, we check using Proposition \ref{SLP_left_right} if $X$ has maximal right $\mathbb{V}$-factorisation of length greater than or equal to $\abs{\word{\mathbb{V}}}$. If so, then let $X = X[:l]\cdot \mathbb{V}[-i:]\cdot \mathbb{V}^k\cdot \mathbb{V}[:j]$ be the maximal right $\mathbb{V}$-factorisation found. Note that this is unique as $\word{\mathbb{V}}$ is primitive. We now add two new states $c$ and $d$ and four new transitions: $(p, X[:l+i], c)$, $(c, v^k, d)$, $(c, X[l+i:\abs{\word{X}} - j], d)$ and $(d, X[-j:], q)$. Now we remove the transition $(p, X, q)$ and denote by $\mathcal{A}'$ the resulting CDFA. Note that $\abs{\delta'}\leq 4\cdot \abs{\delta}$.
\item[Step 2\label{itm:step2}] For every pair of states $(p, q) \in Q'\times Q'$, decide if $\mathbb{V}^m$ determines a path from $p$ to $q$ for some $m\leq 2\cdot \abs{\delta'}+1$ using Theorem \ref{jez_compressed}. If so, then let $m$ be the smallest such integer and add a transition $(p, v^m, q)$, if it doesn't exist already.
\item[Step 3\label{itm:step3}] For every state $q\in Q'$, decide if $\mathbb{U}\cdot \mathbb{V}^m$ determines a path from $q_0'$ to $q$ for some $m\leq 2\cdot \abs{\delta'}+1$ using Theorem \ref{jez_compressed}. If so, then let $m$ be the smallest such integer and add a transition $(q_0', u\cdot v^m, q)$.
\end{description}

Denote by $\mathcal{A}''$ the resulting CDFA and by $\delta_v\subset \delta''$ the transitions with label in $\{u, v\}^*$. The above steps may be done in polynomial time and the subautomaton on the transitions $\delta'' - \delta_v$ accepts precisely the same language as the automaton $\mathcal{A}$ that we started off with. 

\textbf{Claim 1:} $\mathbb{V}^m$ determines a path between states $p$ and $q$ if and only if $v^m$ does.

One direction is by construction so we show the other direction by induction on $m$. The base case when $m = 0$ is trivial. So suppose the claim holds for all $k<m$. Suppose that $\mathbb{V}^m$ determines a path between states $p$ and $q$ traversing a transition $(c, X, d)\in \delta'' -\delta_z$ satisfying $\abs{\word{X}}\geq \abs{\word{\mathbb{V}}}$. Then $X$ has a $\mathbb{V}$-factorisation and so by \nameref{itm:step1}, $X = \mathbb{V}^k$ for some $k\geq 1$. By uniqueness of $\mathbb{V}$-factorisations, it follows that there is some $j<m$ such that $\mathbb{V}^j$ determines a path between $p$ and $c$. Hence $\mathbb{V}^{m-j-k}$ also determines a path between $d$ and $q$. By induction the claim follows. So now assume that $\mathbb{V}^m$ traverses only transitions $(c, X, d)$ satisfying $\abs{\word{X}}<\abs{\word{\mathbb{V}}}$. By Lemma \ref{power_pump}, we have $m\leq 2\cdot \abs{\delta'' - \delta_v}+1$. But now by \nameref{itm:step2}, there is a transition $(p, v^m, q)$ and so the claim is proven.

\textbf{Claim 2:} $\mathbb{U}\cdot \mathbb{V}^m$ determines a path between states $q_0$ and $q$ if and only if $u\cdot v^m$ does.

We also prove this claim by induction on $m$. Suppose that $\mathbb{U}\cdot \mathbb{V}^m$ determines a path between $q_0$ and $q$. If $\mathbb{U}$ determines a path between $q_0$ and some state, then we are done. So suppose $\mathbb{U}$ determines a path between $q_0$ and $\alpha(i)$ for some $\alpha\in \delta'' - \delta_v$ and some $i\geq 1$. As before, we first assume that $\mathbb{V}^m$ determines a path from $\alpha(i)$ to $q$ traversing a transition $(c, X, d)\in \delta'' - \delta_v$ satisfying $\abs{\word{X}}\geq \abs{\word{\mathbb{V}}}$. Note that $\mathbb{V}^m$ must traverse a transition by \nameref{itm:step1}. Then just as in the proof of claim 1, we may use induction to see that $u\cdot v^m$ must determine a path from $q_0$ to $q$. Now suppose that $\mathbb{V}^m$ traverses only transitions $(c, X, d)$ satisfying $\abs{\word{X}}<\abs{\word{\mathbb{V}}}$. Then by Lemma \ref{power_pump} and the fact that $\abs{\alpha[i:]}<\abs{\word{\mathbb{V}}}$, we have $m\leq 2\cdot \abs{\delta'' - \delta_v} + 1$. By \nameref{itm:step3}, there is a transition $(q_0, u\cdot v^m, q)$ already and the clam is proven.

Since $\mathcal{A}$ was assumed deterministic, for every pair of transitions $(p, v^i, q)$, $(p, v^j, r)$ with $i\leq j$, if we remove $(p, v^j, r)$, the accepted language remains unmodified. Similarly for transitions with label $u\cdot v^i$. Hence, if we keep doing this until there is at most one outgoing transition with label in $(v)^*$ or $u\cdot (v)^*$ for each state $p\in Q$, we will be left with a deterministic automaton. Finally, the subautomaton on the transitions $\delta_v$ gives us the required language.
\end{proof}

\section{Maximal prefix membership}

In \cite{jez_12}, the fully compressed membership problem was shown to be in P for CDFA and in NP for CNFA. A natural extension of this problem is the \emph{fully compressed prefix membership problem}: given as input a CNFA $\mathcal{A}$ and an SLP $\mathbb{X}$, decide if any prefix of $\word{\mathbb{X}}$ is in the language $L(\mathcal{A})$. If $\mathcal{A}$ is not a CDFA, then the problem is clearly still in NP. In order to extend Theorem \ref{jez_compressed} to deterministically solve this problem for CDFA, it turns out that the missing ingredient is Theorem \ref{intersection}.

The following proposition will serve as a base for a binary search approach to the fully compressed prefix membership problem.

\begin{prop}
\label{transition_crossing}
There is a polynomial-time algorithm that, given as input a CDFA $\mathcal{A}$, a transition $\alpha = (p, X, q)\in \delta$ and a non-terminal $W\to U\cdot V$, decides if there is some integer $i\leq \abs{\word{X}}$ such that $U\cdot V[:i]$ determines a path from $q_0$ to $q$, traversing $\alpha$ last. If such an integer exists, then the algorithm also computes $i$.
\end{prop}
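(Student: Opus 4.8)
The plan is to reduce the problem to a single application of Theorem \ref{intersection}. First I would record what the condition really asks for. Since $\alpha = (p, X, q)$ must be the \emph{last} transition of the path and the path must \emph{end} at $q$, the whole label $\word{X}$ has to be consumed as the final edge; equivalently, the length-$\abs{\word{X}}$ suffix of $\word{U}\cdot\word{V}[:i]$ equals $\word{X}$ and the preceding prefix determines a path from $q_0$ to $p$. Writing $j = \abs{\word{U}} + i - \abs{\word{X}}$ for the starting position in $\word{W}$ of this occurrence of $\word{X}$, the hypothesis $i\leq \abs{\word{X}}$ is exactly the statement that $j\leq \abs{\word{U}}$, so the occurrence begins inside $U$ and (since it ends at $\abs{\word{U}}+i$) finishes inside $V$: it is a crossing occurrence of $\word{X}$ at the $U\mid V$ boundary. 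Thus the task becomes: find a crossing occurrence, with start position $j$, such that $\word{U}[:j]$ determines a path from $q_0$ to $p$, and then return $i = j + \abs{\word{X}} - \abs{\word{U}}$.

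Next I would enumerate the candidate positions. Applying Lemma \ref{crossing_appearances} at the boundary (at positions $\abs{\word{U}}$ and $\abs{\word{U}}-1$, so as to capture every $i$ with $0\leq i\leq \abs{\word{X}}$) produces the crossing start positions as a bounded number of arithmetic progressions $\sub(X, W, \cdot) = \ap(d, m, l_0) = \{l_0 + t\cdot d \mid 0\leq t\leq m\}$. The difficulty is that $m$ may be exponentially large, so the positions cannot be tested one at a time.

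To get around this I would exploit the forced periodicity. Every occurrence counted by $\sub(X, W, \abs{\word{U}})$ contains the position $\abs{\word{U}}$, so any two of them overlap; hence $d<\abs{\word{X}}$ and two occurrences at distance $d<\abs{\word{X}}$ force the window $\word{W}[l_0 : l_0 + m\cdot d + \abs{\word{X}}]$ to have period $d$. Consequently $\word{U}[:l_0 + t\cdot d] = \word{\mathbb{U}'}\cdot \word{\mathbb{P}}^t$, where $\mathbb{U}'$ is an SLP for $\word{U}[:l_0]$ and $\mathbb{P}$ is a (truncated) non-terminal for the period $\word{W}[l_0 : l_0 + d]$; both are computable in polynomial time. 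Letting $\mathcal{A}_p$ denote $\mathcal{A}$ with final states set to $\{p\}$, the words determining a path from $q_0$ to $p$ are exactly $L(\mathcal{A}_p)$. Applying Theorem \ref{intersection} to $\mathcal{A}_p$, $\mathbb{U}'$ and $\mathbb{P}$ returns the set of exponents $t$ with $\word{\mathbb{U}'}\cdot \word{\mathbb{P}}^t\in L(\mathcal{A}_p)$ as a finite union of arithmetic progressions. Intersecting this set with $\{0, 1, \ldots, m\}$ and selecting the smallest surviving $t$ is routine integer arithmetic; from it I recover $j = l_0 + t\cdot d$ and the required $i$. If no $t$ survives for either progression, no such $i$ exists.

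The main obstacle is precisely the potentially exponential number of crossing positions: a direct search is hopeless. The point that makes the proposition work is that these positions are governed by an arithmetic progression and therefore sit inside a periodic window, which turns the question ``does $\word{U}[:j]$ reach $p$?'' into a power-word reachability question that Theorem \ref{intersection} answers in one shot. The remaining steps --- extracting the period as an SLP, identifying path-reachability of $p$ with membership in $L(\mathcal{A}_p)$, and the final arithmetic --- are routine.
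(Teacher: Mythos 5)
Your proposal is correct and takes essentially the same route as the paper's proof: reduce to crossing appearances of $X$ at the $U\mid V$ boundary via Lemma \ref{crossing_appearances}, use the overlap-induced periodicity to write the candidate prefixes as $\word{W}[:l_0]\cdot(\text{period})^t$, and decide which exponents reach $p$ by applying Theorem \ref{intersection} to the automaton with final state set $\{p\}$. Your treatment of the final exponent arithmetic (intersecting the returned progressions with $\{0,\dots,m\}$) is, if anything, slightly more transparent than the paper's.
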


\begin{proof}
If such an $i$ exists, then there is a crossing appearance of $X$ in $W\to U\cdot V$. By Lemma \ref{crossing_appearances} we may decide in polynomial time if $W\to U\cdot V$ has a crossing appearance of $X$. If it does, then the algorithm also produces a triple $j, k, l$ such that $\ap(j, k, l)$ encodes all the crossing appearances of $X$ in $W$. Now if such an $i$ exists, then there must be some $m\in \ap(j, k, l)$ such that $W[:m]$ determines a path from $q_0$ to $p$. In particular, we have $W[l:m]\in (W[l:l + k])^*$. So the problem now becomes to find $m$ as then $i = m + \abs{\word{X}}$. Note that since $\mathcal{A}$ is a CDFA, then if $m$ exists, it is unique. Let $\mathcal{A}'$ be a CDFA identical to $\mathcal{A}$ except with $p$ as its only final state. If such an $m$ exists, then we must have $L(\mathcal{A}')\cap W[:l]\cdot (W[l:l+k])^*\neq \emptyset$. By Theorem \ref{intersection}, we may decide in polynomial time if this intersection is non-empty. If it is non-empty then we may also compute integers $n_0, n_1, n$ such that:
\[
L(\mathcal{A}')\cap W[:l]\cdot (W[l:l+k])^* = W[:l]\cdot W[l:l+k]^{n_0}\cdot (W[l:l+k]^n)^*\cdot W[l:l+k]^{n_1}.
\]
Note that $X = W[l:l+k]^r\cdot W[l:l + k']$ for some $r\geq j$ and $k'<k$. If $n\neq 0$, then $W[l:l+k]^n$ determines a path from $p$ to $p$, traversing $\alpha$. Hence we must have $r\leq n$. But if $r\leq n$, then such an $m$ exists if and only if $n_0 + n_1\leq j$. If this condition is satisfied, then:
\[
m = l + (n_0 + n_1)\cdot k
\]
and so:
\[
i = l + (n_0 + n_1)\cdot k + \abs{\word{X}}.
\]
\end{proof}

The following Theorem puts the fully compressed prefix membership problem for CDFA in P. Given a positive answer to an instance of the fully compressed membership problem, it also provides a maximal prefix. Finding the maximal prefix is key to our involutive CNFA to CDFA conversion algorithm in Section \ref{CNFA_to_CDFA}. Furthermore, by combining Theorem \ref{fully_prefix} with the proof of Theorem 7.5 in \cite{me_21}, we may upgrade Theorem \ref{compressed_nei} to a deterministic algorithm in the case that $\mathbb{S} = \emptyset$.

\begin{thm}
\label{fully_prefix}
There is a polynomial-time algorithm that, given as input a CDFA $\mathcal{A}$ and an SLP $\mathbb{W}$, decides if there exists an integer $i$ such that $\mathbb{W}[:i]\in L(\mathcal{A})$. If such an $i$ exists, the algorithm also computes the largest such $i$.
\end{thm}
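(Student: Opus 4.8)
The plan is to compute the unique run of $\word{\mathbb{W}}$ through $\mathcal{A}$ and to return the position of the last boundary of that run which lies at a final state. Since $\mathcal{A}$ is a CDFA, from $q_0$ there is at most one path whose label is a prefix of $\word{\mathbb{W}}$, so reading $\word{\mathbb{W}}$ from $q_0$ determines a well-defined (possibly failing) run that visits a sequence of state boundaries $0 = b_0 < b_1 < \cdots$; a prefix $\word{\mathbb{W}}[:i]$ lies in $L(\mathcal{A})$ exactly when $i = b_j$ for some $j$ and the state reached at $b_j$ is final. First I would normalise $\mathcal{A}$ as in the proof of Theorem \ref{compressed_nei}, so that $q_0$ has no incoming transitions and the final states have no outgoing transitions; this makes ``the largest accepting prefix'' unambiguous and lets me freely relocate the set of final states (as is also done in Proposition \ref{transition_crossing}).

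The core is a recursion over the derivation tree of $\mathbb{W}$, processing non-terminals in order of increasing height and maintaining, for a non-terminal $Y$ and a starting state $s$, a summary of reading $\word{Y}$ from $s$: whether the run fails, ends at a genuine boundary state $t$, or ends in the interior of a transition $\beta$ at some offset, together with the largest prefix length that ends at a final state. For a production $W \to U\cdot V$ and a start state $s$, I would compose the summary of $U$ from $s$ with that of $V$. If $U$ ends at a boundary state $t$, I continue by reading $V$ from $t$. The delicate case is when the run reaches the split point $|\word{U}|$ in the interior of some transition $\alpha$, i.e. a transition whose label has a crossing appearance at the $U$--$V$ boundary. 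Here I would invoke Proposition \ref{transition_crossing}, ranging over the polynomially many candidate transitions $\alpha\in\delta$, to locate the unique first boundary $b^{*}\ge |\word{U}|$ of the run and the state $t$ reached there; by determinism this is well defined. This realigns the run to the SLP structure and reduces the problem to a sub-run on $U$ from $s$ and a sub-run on the suffix $V[b^{*}-|\word{U}|:]$ from $t$, handled by descending into the subtree of $V$. Compressed equality and prefix comparisons needed to detect failures can be performed in polynomial time using the primitives of Section 2 (e.g. Proposition \ref{SLP_left_right}).

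The obstruction to this being polynomial is that the run may be exponentially long: it may traverse a cycle of $\mathcal{A}$ an enormous number of times while $\word{\mathbb{W}}$ repeats a period, so the naive unfolding into truncated non-terminals would branch exponentially. The remedy, and the reason Proposition \ref{transition_crossing} is built on Theorem \ref{intersection}, is to collapse each such periodic stretch in one shot. When the crossing analysis of $\alpha$ via Lemma \ref{crossing_appearances} reveals that the crossing appearances form a nontrivial arithmetic progression $\ap(j,k,l)$, the corresponding boundaries of the run all lie inside a set of the form $W[:l]\cdot (W[l:l+k])^{*}$; intersecting this with $L(\mathcal{A}')$, where $\mathcal{A}'$ records the relevant final states, via Theorem \ref{intersection} returns, as integers in binary, both how far the pumped segment runs and the largest boundary inside it that hits a final state. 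Batching each periodic stretch this way keeps the number of recursive calls, and hence the number of distinct truncated non-terminals ever produced, polynomial in the size of the SLP.

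Finally, I would assemble the answer by taking the maximum, over the polynomially many accepting boundaries recorded during the descent (each stored in binary), of those whose state is in $F$; this is the largest $i$ with $\word{\mathbb{W}}[:i]\in L(\mathcal{A})$. Existence is then immediate: report that no $i$ exists precisely when no accepting boundary is found, noting that $i=0$ is accepting exactly when $q_0\in F$. I expect the main obstacle to be exactly the composition across a production boundary when a single compressed transition label straddles the split while the input simultaneously repeats a period: one must reconcile the arithmetic progression of crossing appearances from Lemma \ref{crossing_appearances} with the arithmetic-progression description of accepting positions from Theorem \ref{intersection}, and verify that determinism of $\mathcal{A}$ forces all the intermediate data (the crossing transition $\alpha$, the realigned boundary $b^{*}$, and the reached state $t$) to be unique, so that the bookkeeping remains polynomial throughout.
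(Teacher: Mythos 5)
Your key local step---using Proposition \ref{transition_crossing} at a production boundary $W\to U\cdot V$ to find the transition $\alpha$ straddling the split, the first run boundary $b^*\geq\abs{\word{U}}$, and the state reached there---is exactly the engine of the paper's proof. The gap is in your global recursion scheme. You propose a bottom-up dynamic program over pairs (non-terminal, start configuration), composing the summary of $U$ with that of $V$; but the composition step forces you to solve sub-problems on truncated non-terminals $V[b^*-\abs{\word{U}}:]$ started from mid-run configurations, and these sub-problems are \emph{not} drawn from a polynomially bounded index set. A non-terminal $Y$ occurs at exponentially many nodes of the derivation tree, each occurrence beginning at a different point of the single, exponentially long run of $\word{\mathbb{W}}$ through $\mathcal{A}$, and hence inducing a different truncation offset and a different summary; nothing in your argument bounds the number of distinct offsets per non-terminal. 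Your proposed remedy---collapsing periodic stretches via Theorem \ref{intersection}---handles the run winding around one cycle of $\mathcal{A}$ inside one production, but not this combinatorial accumulation of realignment offsets across the DAG; in effect you are trying to re-derive a strengthening of Theorem \ref{jez_compressed} by the naive DP, and you yourself flag that the polynomial bookkeeping bound is unverified. A second, related problem: the run may pass through final states exponentially often, so ``taking the maximum over the polynomially many accepting boundaries recorded during the descent'' is not available---there is no polynomial list to maximise over.

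The paper avoids all of this by never composing two children. It recurses top-down on the \emph{root} production only: if no transition straddles the $U$--$V$ split (detected by running Proposition \ref{transition_crossing} over all $\alpha\in\delta$), the maximal accepting prefix lies in $U$ and one recurses on $U$ with the same automaton; otherwise one splits the straddling transition $\alpha$ at the point reached after reading $\word{U}$, relocates the initial state there to obtain $\mathcal{A}'$ with $L(\mathcal{A})\cap\word{U}\cdot\Sigma^*=\word{U}\cdot L(\mathcal{A}')$, and recurses on $V$ alone with $\mathcal{A}'$. One recursive call per level gives recursion depth $||\mathbb{W}||$ and hence a polynomial bound, with all of the ``exponentially long run'' difficulty delegated to the compressed membership machinery inside Proposition \ref{transition_crossing}. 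This left-or-right decision at each level---available precisely because only the \emph{largest} accepting prefix is sought, so no summaries of interior occurrences are ever needed---is the idea your proposal is missing.
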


\begin{proof}
Let $\mathcal{X}$ be the non-terminals of $\mathcal{A}$. If $\abs{\word{\mathbb{W}}}< \min{\{\abs{\word{X}} \mid X\in\mathcal{X}\}}$ then $\mathbb{W}[:i]$ cannot traverse any transition for any $i>0$. If $q_0\in F$, then $i = 0$, otherwise there is no such $i$ and we are done.

Now suppose that $|\word{\mathbb{W}}|\geq |\min{\{\abs{\word{X}} \mid X\in\mathcal{X}\}}|$. The proof is by induction on height. When $||\mathbb{W}|| = 0$ the result is clear. For the inductive hypothesis, suppose the theorem holds for SLPs of height strictly less than $||\mathbb{W}||$.  For each transition $\alpha = (p, X, q)\in \delta$, using Proposition \ref{transition_crossing} we may determine whether there is some integer $j\leq |\word{X}|$ such that $U\cdot V[:j]$ determines a path from $q_0$ to $q$ and traversing $\alpha$ last, where $W\to U\cdot V$ is the root production of $\mathbb{W}$.

If there is no such transition, then, if $i$ exists, we must have $i\leq |\word{U}|$. Since $||U||< ||W||$, by the inductive hypothesis we may decide if there is some $i$ such that $U[:i]\in L(\mathcal{A})$ in polynomial time. Furthermore, if it exists, we may also compute the maximal such $i$. Thus we have decided if there is some $i$ such that $\mathbb{W}[:i]\in L(\mathcal{A})$ and computed the maximal such $i$ in polynomial time.

If there is such a transition $\alpha = (p, X, q)$, then $i>|\word{U}|$ and $U$ determines a path from $q_0$ to $\alpha(-j)$. Now we modify our automaton $\mathcal{A}$ in the following way: remove the transition $\alpha$, add a new state $c$ and two new transitions $(p, X[:-j], c)$ and $(c, X[-j:], q)$. After changing the initial state to $c$, let $\mathcal{A}'$ be the resulting CDFA. We have that $U$ determines a path from $q_0$ to $c$ and so
\[
L(\mathcal{A}) \cap \word{U}\cdot \Sigma^* = \word{U}\cdot L(\mathcal{A}').
\]
Since $||V||<||W||$, by the inductive hypothesis we may determine the maximal index $l$ such that $V[:l]\in L(\mathcal{A}')$ in polynomial time. By construction, $V[:l]\in L(\mathcal{A}')$ if and only if $\mathbb{W}[:|\word{U}| + l]\in L(\mathcal{A})$.
\end{proof}

\begin{cor}
The fully compressed prefix membership problem for CDFA (respectively CNFA) is in P (respectively NP).
\end{cor}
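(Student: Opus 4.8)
The plan is to handle the deterministic and nondeterministic cases separately, with the CDFA case being essentially immediate and the CNFA case being a routine guess-and-verify argument.

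For the CDFA case I would simply invoke Theorem \ref{fully_prefix}. Given a CDFA $\mathcal{A}$ and an SLP $\mathbb{W}$, that theorem already supplies a polynomial-time algorithm deciding whether $\mathbb{W}[:i]\in L(\mathcal{A})$ for some integer $i$, and this is exactly the fully compressed prefix membership problem. So nothing further is needed; as a bonus, the theorem even returns the largest such $i$.

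For the CNFA case the goal is to place the problem in NP, and here the one point I would treat with care is that the prefix length $i$ ranges up to $\abs{\word{\mathbb{W}}}$, which may be exponential in the input size. Hence $i$ cannot be written in unary. Instead I would nondeterministically guess $i$ in binary, which costs only polynomially many bits. Given such an $i$, the prefix $\word{\mathbb{W}}[:i]$ is represented by the truncated non-terminal $\mathbb{W}[:i]$, and by \cite{hagenah_00} this composition system can be converted into an ordinary SLP in polynomial time. It then remains to certify $\mathbb{W}[:i]\in L(\mathcal{A})$; since $\mathcal{A}$ is a CNFA, the fully compressed membership problem for $\mathcal{A}$ is in NP by Theorem \ref{jez_compressed}, so I would additionally guess the corresponding polynomial-size membership witness and verify it in polynomial time.

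Composing the two nondeterministic steps—guessing $i$ and guessing the membership witness—yields a single NP algorithm for the prefix problem, completing the CNFA case. I do not expect any genuine obstacle here: the only subtlety is the one already flagged, namely that the index $i$ and the prefix itself must be kept in compressed form (binary encoding of $i$, SLP representation of $\mathbb{W}[:i]$) throughout, rather than expanded, so that the whole verification stays polynomial in the input size.
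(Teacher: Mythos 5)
Your proposal is correct and follows essentially the same route as the paper: the CDFA case is exactly Theorem \ref{fully_prefix}, and the CNFA case is the guess-and-verify argument the paper alludes to when it remarks that ``if $\mathcal{A}$ is not a CDFA, then the problem is clearly still in NP.'' Your extra care about encoding $i$ in binary and keeping the prefix as a truncated non-terminal (converted to an SLP via \cite{hagenah_00}) is exactly the right way to make that remark precise.
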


\section{Converting an involutive CNFA to an involutive CDFA}
\label{CNFA_to_CDFA}

Any NFA $\mathcal{A}$ can be converted to a DFA $\mathcal{A}'$ such that $L(\mathcal{A}) = L(\mathcal{A}')$. However, even if $\mathcal{A}'$ is minimal, it may have exponentially many more states and transitions than $\mathcal{A}$. The same holds true for CNFAs. When our input automata are involutive and we want to preserve the image of the language under the map $\pi$, this is no longer true. In this section we present an algorithm to solve the following problem:

\begin{description}
\item[Input] An involutive CNFA $\mathcal{A}$.
\item[Output] An involutive CDFA $\mathcal{A}'$ such that $\pi(L(\mathcal{A})) = \pi(L(\mathcal{A}'))$.
\end{description}

We call this \emph{involutive CNFA to CDFA conversion}. In the classic Stallings algorithm \cite{sta_83}, we are given an involutive NFA as input and we identify pairs of adjacent transitions with the same label until we obtain an involutive DFA which accepts the same language. Each such identification is called a \emph{fold}. Since each fold decreases the number of transitions by one, the number of folds to be performed is bounded above by the number of transitions we started off with. 

Since our input will have compressed transition labels, the number of folds to be performed may be exponential in the input size. In order to overcome this problem, we would like to perform many folds at once. We may do this by what we call \emph{transition folding}.

\subsection{Transition folding}

Let $\mathcal{A}$ be an involutive CNFA. Let $\alpha = (p, X, q)\in \delta$ be a transition. We say a CNFA $\mathcal{A}'$ is obtained from $\mathcal{A}$ by a \emph{transition fold}, or, \emph{folding $\alpha$}, if $\mathcal{A}'$ is obtained in the following way. Let $i$ and $j$ be largest integers such that $X[:i]$ and $X^{-1}[:j]$ determine paths in $\mathcal{A}$ from $p$ and $q$ respectively, not traversing $\alpha$ or $\alpha^{-1}$, and subject to the constraint that $i + j \leq \abs{\word{X}}$. Suppose that $X[:i]$ and $X^{-1}[:j]$ determine paths to $\beta(k)$ and $\gamma(l)$ respectively in $\mathcal{A}$, without traversing $\alpha$ or $\alpha^{-1}$. Let $\beta = (r, Y, s)$ and $\gamma = (t, Z, u)$. We have two cases to consider:

\begin{figure}[t]
\includegraphics[width=8cm]{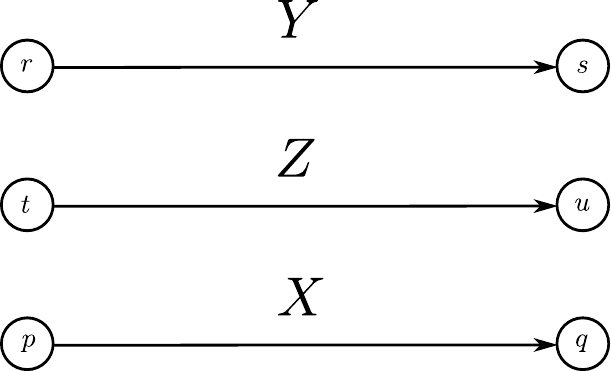}
\caption{Transitions $\beta, \gamma$ and $\alpha$.}
\label{transition_fold_1}
\centering
\end{figure}

\begin{figure}[t]
\includegraphics[width=8cm]{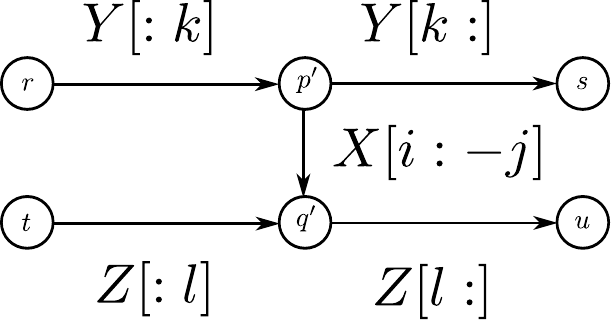}
\caption{Result of folding $\alpha$.}
\label{transition_fold_2}
\centering
\end{figure}

\textbf{Case 1:} $\beta\neq \gamma, \gamma^{-1}$. 

Then we remove the transitions $\alpha, \beta, \gamma$ and their inverses and we add states $p'$ and $q'$ along with the following transitions:
\[
(r, Y[:k], p'), (p', Y[k:], s),
\]
\[
(t, Z[:l], q'), (q', Z[l:], u),
\]
\[
(p', X[i: -j], q'),
\]
and their inverses. See Figures \ref{transition_fold_1} and \ref{transition_fold_2}.

\textbf{Case 2:} $\beta =\gamma$ or $\beta = \gamma^{-1}$. 

Up to replacing transitions by their inverses, we may assume that $\beta = \gamma$ and $k\leq l$. 

\textbf{Subcase 2.1:} $k < l$.

Then we remove the transitions $\alpha, \beta$ and their inverses and add states $p', q'$ and transitions:
\[
(r, Y[:k], p'), (p', Y[k:l], q'), (q', Y[l:], s),
\]
\[
(p', X[i:-j], q'),
\]
and their inverses.

\textbf{Subcase 2.2:} $k = l$.

Let $m$ be the largest integer such that $\word{X[i:i+m]} = \word{X^{-1}[j:j+m]}$. Then we remove the transitions $\alpha, \beta$ and their inverses and add states $p', q'$ and transitions:
\[
(r, Y[:k], p'), (p', Y[k:], s),
\]
\[
(p', X[i:i+m], q'), (q', X[i+m:-j-m], q'),
\]
and their inverses.

Finally, we remove any transitions with empty label and identify the corresponding states. If $X[:i]$ instead determines a path to a state $q'$, then we may add a transition $\beta = (q', \epsilon, q')$ so that $X[:i]$ determines a path to $\beta(0)$ and folding $\alpha$ is defined as above. Similarly for $X^{-1}[:j]$.

We will write $\mathcal{A}\rightarrow \mathcal{A}'$ to denote that $\mathcal{A}'$ is obtained from $\mathcal{A}$ by a transition fold and $\mathcal{A}\xrightarrow{\alpha} \mathcal{A}'$ to denote that $\mathcal{A}'$ is obtained from $\mathcal{A}$ be folding the transition $\alpha$. We will call a transition fold $\mathcal{A}\rightarrow \mathcal{A}'$ \emph{complete} if $i+j = \abs{\word{X}}$, \emph{incomplete} otherwise. Note that if $\mathcal{A}$ without the transitions $\alpha$ and $\alpha^{-1}$ is not deterministic, then there may be more than one CNFA that is obtained from $\mathcal{A}$ by folding the transition $\alpha$.

The following two lemmas are by definition of a transition fold.

\begin{lma}
\label{fold_properties}
If $\mathcal{A}\rightarrow \mathcal{A}'$, then:
\begin{enumerate}
\item $\pi(L(\mathcal{A}')) = \pi(L(\mathcal{A}))$,
\item $\abs{\delta'} \leq \abs{\delta} + 4$ and $\abs{\mathcal{X}'} = \abs{\mathcal{X}}$,
\item if each transition label of $\mathcal{A}$ is freely reduced, then so is each transition label of $\mathcal{A}'$.
\end{enumerate}
\end{lma}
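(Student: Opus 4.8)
The plan is to verify all three claims directly from the construction, treating the image-preservation claim (1) via a reduction to classical Stallings folds and claims (2)--(3) by bookkeeping. The single computation underpinning everything is the factorisation $X = X[:i]\cdot X[i:-j]\cdot X[-j:]$, which is valid precisely because the defining constraint $i+j\leq \abs{\word{X}}$ holds, together with the identity $\word{X^{-1}[:j]} = (\word{X}[-j:])^{-1}$ coming from $\word{X^{-1}} = \word{X}^{-1}$.

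For claim (1), first I would pass to the decompressed automaton, in which every transition is subdivided into single-letter edges. In this picture the new state $p' = \beta(k)$ is just the vertex of $\beta$ after its first $k$ letters, and the path that $X[:i]$ determines from $p$ already terminates there; likewise $q' = \gamma(l)$ is the vertex $l$ letters along $\gamma$, reached by the path $X^{-1}[:j]$ from $q$. Folding $\alpha$ is then exactly the composite of $i$ elementary Stallings folds identifying the first $i$ edges of $\alpha$ with the corresponding edges of the $X[:i]$-path, followed by $j$ elementary folds identifying the last $j$ edges of $\alpha$ with those of the $X^{-1}[:j]$-path, using the involution to reverse orientation since $\word{X^{-1}[:j]}$ is the inverse of the suffix $\word{X}[-j:]$. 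Each elementary fold identifies two edges sharing a source and carrying the same label, hence preserves the image of the accepted language under $\pi$ by the classical theory; so does their composite. Concretely, any accepting path of $\mathcal{A}$ that traverses $\alpha$ reroutes to the path $p\to p'\to q'\to q$ reading $\word{X[:i]}\cdot\word{X[i:-j]}\cdot\word{X[-j:]} = \word{X}$ in $\mathcal{A}'$, and conversely, giving $\pi(L(\mathcal{A}')) = \pi(L(\mathcal{A}))$.

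Claims (2) and (3) are then pure bookkeeping. In Case 1 we delete $\alpha,\beta,\gamma$ together with their three inverses (six transitions) and add the five listed transitions together with their inverses (ten transitions), for a net change of $+4$; in both subcases of Case 2 we delete $\alpha,\beta$ and their inverses (four) and add four transitions and their inverses (eight), again a net $+4$. The closing cleanup --- deleting empty-label transitions (for instance $(r,Y[:k],p')$ when $k=0$) and identifying states --- can only remove transitions, so $\abs{\delta'}\leq\abs{\delta}+4$ in every case. No new non-terminal is created: every new label is a truncated non-terminal built from the existing non-terminals $X$, $Y$, $Z$, so the underlying alphabet is unchanged and $\abs{\mathcal{X}'} = \abs{\mathcal{X}}$. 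For (3), each new label is a subword $X[a:b]$, $Y[a:b]$ or $Z[a:b]$ of one of the freely reduced words $\word{X}$, $\word{Y}$, $\word{Z}$, and a subword of a freely reduced word is freely reduced.

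The one place demanding care is Subcase 2.2, where $\beta(k)=\gamma(l)$ forces the two halves of $\alpha$ to continue along a common transition and the residual edges of $\alpha$ fold against one another; here the loop $(q', X[i+m:-j-m], q')$ records exactly the part that survives, and the maximality of $m$ (the longest prefix on which $\word{X[i:]}$ and $\word{X^{-1}[j:]}$ agree) is what guarantees that no further elementary fold is possible, so that the image-preserving reduction of claim (1) terminates correctly. This, rather than the counting, is the main subtlety.
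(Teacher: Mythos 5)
Your proposal is correct and follows the same route the paper intends: the paper offers no written proof beyond the remark that the lemma holds ``by definition of a transition fold,'' and your argument is exactly the direct verification of that definition --- reduction to elementary Stallings folds in the decompressed automaton for item (1), and case-by-case counting of removed versus added transitions (net $+4$ in every case, with the cleanup only removing transitions) together with the observation that all new labels are truncated copies of existing non-terminals, hence subwords of freely reduced words, for items (2) and (3).
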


\begin{lma}
\label{incomplete_fold}
Suppose that the subautomaton of $\mathcal{A}$ without the transitions $\alpha$ and $\alpha^{-1}$ is a CDFA. Then if $\mathcal{A}\xrightarrow{\alpha}\mathcal{A}'$ is an incomplete transition fold, then $\mathcal{A}'$ is a CDFA.
\end{lma}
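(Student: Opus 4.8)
The plan is to verify directly that $\mathcal{A}'$ satisfies the determinism condition: for every state $x$ and every pair of distinct outgoing transitions of $x$, their labels begin with different letters. Since the subautomaton of $\mathcal{A}$ obtained by deleting $\alpha$ and $\alpha^{-1}$ is already a CDFA, any failure of determinism in $\mathcal{A}'$ must occur at a state whose outgoing transitions were altered by the fold. Inspecting the construction, the only such states are the endpoints $r, s, t, u$ of $\beta$ and $\gamma$, the endpoints $p, q$ of $\alpha$, and the two newly created states $p'$ and $q'$. So the proof reduces to checking determinism at these states, case by case according to Case 1, Subcase 2.1 and Subcase 2.2.

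For the old endpoints the verification is routine. Splitting $\beta = (r, Y, s)$ into $(r, Y[:k], p')$ and $(p', Y[k:], s)$ (and similarly for $\gamma$) does not change the first letter of the transition leaving $r$, nor the first letter of the inverse transition leaving $s$; hence determinism at $r, s$ (and $t, u$) is inherited from the subautomaton. At $p$ and $q$ we have only removed the outgoing transitions $\alpha$ and $\alpha^{-1}$, which can only improve determinism. The degenerate situations $k = 0$ or $l = 0$, where a split transition acquires an empty label, are absorbed by the final clause of the definition which deletes empty-label transitions and identifies the corresponding states; in these cases $p'$ (or $q'$) is simply identified with an old endpoint and the analysis below still applies.

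The heart of the argument is determinism at $p'$ and $q'$, and this is exactly where incompleteness is used. Because the fold is incomplete we have $i + j < \abs{\word{X}}$, so the constraint $i + j \leq \abs{\word{X}}$ is not binding; consequently $i$ is the genuinely maximal length of a path from $p$ spelling a prefix of $X$ (avoiding $\alpha, \alpha^{-1}$), and likewise $j$ for $X^{-1}$ from $q$. Take Case 1: the outgoing transitions at $p'$ are $(p', X[i:-j], q')$, the forward continuation $(p', Y[k:], s)$, and the back-transition $(p', Y[:k]^{-1}, r)$, with first letters $X[i]$, $Y[k]$ and $Y[k-1]^{-1}$ respectively. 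Maximality of $i$ gives $Y[k] \neq X[i]$ --- otherwise $X[:i+1]$ would determine a longer path from $p$ --- and free reduction of $X$ gives $X[i] \neq X[i-1]^{-1} = Y[k-1]^{-1}$, since the last letter traversed by the path $X[:i]$ is $X[i-1] = Y[k-1]$. The same computation, with $j$ and the first letter $X^{-1}[j]$ of the central transition's inverse playing the roles of $i$ and $X[i]$, establishes determinism at $q'$, and Subcase 2.1 is identical after relabelling. Incompleteness also guarantees $X[i:-j]$ is non-empty, so $p'$ and $q'$ are not identified; in a complete fold this central transition would vanish, the two states would merge, and new non-determinism could appear --- precisely the phenomenon the hypothesis excludes.

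Subcase 2.2 is the one extra wrinkle, because of the self-loop $(q', X[i+m:-j-m], q')$. Here determinism at $q'$ additionally requires that the self-loop and its inverse begin with different letters, i.e. $X[i+m] \neq X^{-1}[j+m]$, and this is exactly the maximality of $m$ in the defining condition $\word{X[i:i+m]} = \word{X^{-1}[j:j+m]}$; the remaining pairs of first letters are separated as before by free reduction. I expect this self-loop case, together with the bookkeeping of which of the several involutive transitions emanate from $p'$ and $q'$ in each case, to be the only genuinely delicate point; everything else is a direct consequence of the maximality of $i$ and $j$ furnished by incompleteness and of the standing assumption (maintained by Lemma \ref{fold_properties}) that all transition labels are freely reduced.
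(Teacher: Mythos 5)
Your proof is correct, and it is precisely the direct verification that the paper compresses into the single remark that this lemma holds ``by definition of a transition fold'': you check determinism at the only affected states and isolate the two genuinely needed facts, namely that incompleteness makes $i$ and $j$ (and $m$ in Subcase 2.2) honestly maximal and that free reduction of the labels separates the remaining first-letter pairs. Nothing in your argument departs from what the paper's definition already encodes, so this is the same approach, just written out in full.
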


The following theorem follows directly from Theorem \ref{fully_prefix} and the definition of a transition fold.

\begin{thm}
\label{transition_fold}
There is a polynomial-time algorithm that, given as input an involutive CDFA $\mathcal{A}$ and a transition $\alpha\in \delta$ such that the subautomaton of $\mathcal{A}$ without the transitions $\alpha$ and $\alpha^{-1}$ is a CDFA, computes an involutive CNFA $\mathcal{A}'$ such that $\mathcal{A}\xrightarrow{\alpha} \mathcal{A}'$.
\end{thm}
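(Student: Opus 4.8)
The plan is to reduce the computation of the two folding parameters $i$ and $j$ to instances of the fully compressed maximal prefix membership problem, which is solved in polynomial time by Theorem \ref{fully_prefix}. Write $\alpha = (p, X, q)$ and let $\mathcal{A}_0$ be the subautomaton of $\mathcal{A}$ obtained by deleting $\alpha$ and $\alpha^{-1}$; by hypothesis $\mathcal{A}_0$ is a CDFA. To compute $i$, let $\mathcal{A}_p$ be the CDFA obtained from $\mathcal{A}_0$ by declaring $p$ to be the initial state and every state to be final. Then for any index $r$ the prefix $X[:r]$ determines a path from $p$ without traversing $\alpha$ or $\alpha^{-1}$ if and only if $X[:r]\in L(\mathcal{A}_p)$, simply because making all states final turns ``determines a path'' into ``is accepted''. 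Applying Theorem \ref{fully_prefix} to the pair $(\mathcal{A}_p, X)$ therefore returns the largest such $r$, which is exactly $i$; note $i\geq 0$ always, since the empty prefix is accepted at $p$. Symmetrically, because $\mathcal{A}$ is involutive the inverse non-terminal $X^{-1}$ is available as the label of $\alpha^{-1}$, so applying Theorem \ref{fully_prefix} to $(\mathcal{A}_q, X^{-1})$, where $\mathcal{A}_q$ has initial state $q$ and all states final, returns the largest $j$ for which $X^{-1}[:j]$ determines a path from $q$.

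To respect the constraint $i + j \leq \abs{\word{X}}$ I would first compute the unconstrained maxima $i_0$ and $j_0$ as above. Since $\mathcal{A}_0$ is deterministic, for each fixed starting state the set of prefixes determining a path is downward closed with a unique maximum, so $(i_0,j_0)$ is well defined. If $i_0 + j_0 \leq \abs{\word{X}}$ then $(i,j)=(i_0,j_0)$ and the fold is incomplete, or complete in case of equality. Otherwise the two folding fronts overlap, the fold is complete, and any pair with $i+j=\abs{\word{X}}$, $i\le i_0$, $j\le j_0$ is a valid choice of parameters; I would take $i = i_0$ and $j = \abs{\word{X}} - i_0$, which lies in this range because $i_0+j_0>\abs{\word{X}}$. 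As the theorem only asks for \emph{some} $\mathcal{A}'$ with $\mathcal{A}\xrightarrow{\alpha}\mathcal{A}'$, this choice suffices.

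Next I need the terminal data $\beta=(r,Y,s)$, $k$ and $\gamma=(t,Z,u)$, $l$, where $X[:i]$ determines a path from $p$ to $\beta(k)$ and $X^{-1}[:j]$ a path from $q$ to $\gamma(l)$. These are obtained as a byproduct of the prefix computation: the algorithm of Theorem \ref{fully_prefix} identifies, via Proposition \ref{transition_crossing}, the last transition traversed together with the position within it, so $\beta,k$ and $\gamma,l$ are read off directly. With $i,j,\beta,\gamma,k,l$ in hand, the construction of $\mathcal{A}'$ is exactly the bookkeeping prescribed in the definition of a transition fold: one decides which case applies by testing whether $\beta$ equals $\gamma$ or $\gamma^{-1}$ and comparing $k$ with $l$, then deletes a bounded number of transitions and inserts a bounded number of new states and transitions whose labels are truncated non-terminals such as $Y[:k]$, $X[i:-j]$ and $Z[l:]$. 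Each such truncation is a composition-system operation computable in polynomial time, incurring at most the quadratic blow-up already noted for truncated non-terminals, and in Subcase~2.2 the auxiliary length $m$ of the longest common prefix of $X[i:]$ and $X^{-1}[j:]$ is computable in polynomial time by the longest-common-extension tools for SLPs of \cite{gasieniec_96}. Since every new transition is added together with its formal inverse, $\mathcal{A}'$ is involutive by construction, and as the whole procedure consists of a bounded number of calls to Theorem \ref{fully_prefix} and polynomially many SLP manipulations, it runs in polynomial time.

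The main obstacle is concentrated in the first two paragraphs: recognising that ``the largest prefix of $X$ readable from a given state while avoiding $\alpha$ and $\alpha^{-1}$'' is precisely an instance of maximal prefix membership once all states are made final, and then extracting the terminal transition and offset from that same computation. Once this reduction to Theorem \ref{fully_prefix} is in place, the case analysis and the verification that the resulting automaton genuinely realises $\mathcal{A}\xrightarrow{\alpha}\mathcal{A}'$ are routine consequences of the definition of a transition fold, with Lemma \ref{fold_properties} guaranteeing that labels remain freely reduced and that the image language under $\pi$ is preserved.
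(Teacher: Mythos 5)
Your overall strategy --- reduce the computation of the folding data to calls to Theorem \ref{fully_prefix} on the subautomaton with $\alpha$ and $\alpha^{-1}$ deleted, then carry out the bookkeeping in the definition of a transition fold --- is exactly what the paper intends (its proof is the single sentence that the theorem ``follows directly from Theorem \ref{fully_prefix} and the definition of a transition fold''). However, there is a genuine gap in your first paragraph. The equivalence ``$X[:r]$ determines a path from $p$ if and only if $X[:r]\in L(\mathcal{A}_p)$ once all states are made final'' is false: in the paper's sense a word \emph{determines a path to $\beta(k)$}, i.e.\ it is allowed to terminate at an interior position $k$ of a transition $\beta=(r,Y,s)$, whereas membership in $L(\mathcal{A}_p)$ forces the path to end exactly at a state. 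Consequently the value returned by Theorem \ref{fully_prefix} is the largest prefix of $\word{X}$ that ends at a \emph{state} of $\mathcal{A}_0$, which in general undercounts the $i$ of the definition by precisely the offset $k$ into $\beta$. This is not cosmetic: the fold subdivides $\beta$ at $k$ and creates a new state $p'$ with outgoing labels $Y[k:]$ and $X[i:-j]$; determinism of the result (Lemma \ref{incomplete_fold}) relies on $\word{Y}[k]\neq\word{X}[i]$, which holds only when the common prefix has been fully exhausted, i.e.\ when $i$ and $k$ are genuinely maximal. With your smaller $i$ and $k=0$ the output need not be the automaton $\mathcal{A}'$ with $\mathcal{A}\xrightarrow{\alpha}\mathcal{A}'$, and incomplete folds would fail to produce CDFAs.

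The fix is short and uses a tool you already invoke for Subcase 2.2: having computed the largest $r$ with $X[:r]\in L(\mathcal{A}_p)$ and the (unique, by determinism) state $s_0$ reached, locate the unique outgoing transition $\beta=(s_0,Y,\cdot)$ of $\mathcal{A}_0$ with $\word{Y}[0]=\word{X}[r]$, if any, and compute the length $k$ of the longest common prefix of $X[r:]$ and $Y$ by the SLP longest-common-extension algorithm of \cite{gasieniec_96}; then $i=r+k$ and $\beta(k)$ is the terminal datum (maximality of $r$ guarantees $k<\abs{\word{Y}}$, and the degenerate case where no such $\beta$ exists is the paper's $\epsilon$-transition convention). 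This also repairs your third paragraph: Theorem \ref{fully_prefix} as stated returns only the maximal accepted index, not ``the position within the last transition,'' so $\beta,k$ and $\gamma,l$ must be recovered by exactly this extension step rather than read off as a byproduct. The remainder of your argument --- the treatment of the constraint $i+j\leq\abs{\word{X}}$, the case analysis, and the polynomial-time accounting --- is sound.
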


\subsection{Minimalistic involutive CNFAs}

Throughout our algorithm, we will require that our CNFAs be of a particular form. An involutive CNFA $\mathcal{A}$ is \emph{minimalistic} if the following holds:
\begin{enumerate}
\item There are no transitions $(p, X, q)\in \delta$ such that $\word{X}$ is not freely reduced.
\item There are no states $p\in Q$ such that $p\neq q_0$, $p\notin F$ and $p$ has at most two incoming and at most two outgoing transitions.
\end{enumerate}

Every CNFA can be transformed into an equivalent minimalistic one. The following lemma says that we can do this efficiently and without adding any complexity.

\begin{lma}
\label{minimalistic_algorithm}
There is a polynomial-time algorithm that, given as input an involutive CNFA $\mathcal{A}$, computes an involutive CNFA $\mathcal{A}'$ such that:
\begin{enumerate}
\item $\mathcal{A}'$ is minimalistic,
\item $\pi(L(\mathcal{A}')) = \pi(L(\mathcal{A}))$,
\item $\abs{\delta'} + \abs{\mathcal{X}'}\leq \abs{\delta} + \abs{\mathcal{X}}$.
\end{enumerate}
\end{lma}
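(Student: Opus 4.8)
The goal is to establish Lemma \ref{minimalistic_algorithm}: given an involutive CNFA $\mathcal{A}$, produce in polynomial time a minimalistic involutive CNFA $\mathcal{A}'$ accepting the same subgroup (i.e.\ with $\pi(L(\mathcal{A}')) = \pi(L(\mathcal{A}))$) and with $\abs{\delta'} + \abs{\mathcal{X}'}\leq \abs{\delta} + \abs{\mathcal{X}}$. The plan is to address the two defining conditions of minimalistic separately: first ensure all transition labels are freely reduced (condition 1), then suppress low-valence interior states (condition 2), while checking throughout that the involution is respected and the complexity measure $\abs{\delta} + \abs{\mathcal{X}}$ never increases.

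For condition 1, I would iterate over the non-terminals. Given a transition $(p, X, q)$ whose label $\word{X}$ is not freely reduced, the key is that freely reducing $\word{X}$ does not change its image under $\pi$, so replacing $X$ by (an SLP for) its reduction $\red(\pi(\word{X}))$ leaves $\pi(L(\mathcal{A}))$ unchanged. The subtlety is doing this inside the SLP/composition-system framework in polynomial time: one must compute a composition system for the freely reduced form of $\word{X}$, which can be done using the compressed word-problem machinery already cited (the algorithms of \cite{saul_08} for composition systems), and then convert back to an SLP via \cite{hagenah_00} at the cost of a quadratic factor. Since reducing a word only shortens it, this step does not increase the number of non-terminals in an essential way and certainly does not increase $\abs{\delta}$; care is needed to apply the same replacement to $X^{-1}$ so that the involution $(p,X,q)^{-1}=(q,X^{-1},p)$ continues to satisfy $\word{X}=\word{X^{-1}}^{-1}$.

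For condition 2, I would repeatedly locate an interior state $p\neq q_0$, $p\notin F$ with at most two incoming and at most two outgoing transitions and eliminate it by \emph{concatenating} labels along the segment through $p$: each incoming transition $(r, Y, p)$ and outgoing transition $(p, Z, s)$ is merged into a single transition $(r, YZ, s)$ via a new non-terminal $YZ\to Y\cdot Z$, after which $p$ and its now-redundant transitions are deleted. The essential bookkeeping point is the complexity bound: deleting a valence-$(\le 2,\le 2)$ interior state removes at least as many transitions as the number of concatenated transitions we introduce, so $\abs{\delta}$ does not grow, and the number of new non-terminals introduced is offset by the structure of the segment so that $\abs{\delta'} + \abs{\mathcal{X}'}\leq \abs{\delta} + \abs{\mathcal{X}}$ is preserved at each step. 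One must again perform the symmetric operation on the inverse transitions to keep $\mathcal{A}'$ involutive, and observe that concatenation followed by a second pass of free reduction (if a cancellation is created at the junction $\word{Y}\cdot\word{Z}$) keeps condition 1 intact.

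I expect the main obstacle to be the complexity accounting rather than the correctness of language preservation, which is immediate from the $\pi$-invariance of free reduction and of segment concatenation. Specifically, one must argue that interleaving the two cleanup passes terminates in polynomially many steps and that the measure $\abs{\delta}+\abs{\mathcal{X}}$ is genuinely non-increasing even when free reduction at a junction creates new cancellation that propagates; the worry is a cascade where suppressing a state creates an unreduced label whose reduction then re-exposes low-valence states. The resolution is to choose a monovariant — for instance $\abs{\delta}+\abs{\mathcal{X}}$ itself, which strictly decreases whenever a genuine fold or suppression occurs and is unchanged otherwise — and to note that each individual operation (SLP free reduction via \cite{saul_08,hagenah_00}, and segment concatenation) is polynomial, so the whole procedure runs in polynomial time.
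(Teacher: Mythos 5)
Your overall strategy is the paper's: freely reduce labels and suppress low-valence interior states by concatenating labels into new non-terminals, iterating until no such state remains. But there is a genuine gap in the one place you yourself identify as the crux, namely the accounting for $\abs{\delta'}+\abs{\mathcal{X}'}\leq\abs{\delta}+\abs{\mathcal{X}}$. As you describe it, a state $p$ with two incoming and two outgoing transitions is eliminated by merging ``each incoming transition $(r,Y,p)$ and outgoing transition $(p,Z,s)$'' into $(r,YZ,s)$. Taken literally this produces four new transitions and four new non-terminals in place of four old transitions, and the bound fails; the claim that the new non-terminals are ``offset by the structure of the segment'' is not an argument. The missing idea is to use involutivity: at such a state the four incident transitions are forced to be $(p_1,X,q)$, $(p_2,Y,q)$ and their inverses $(q,X^{-1},p_1)$, $(q,Y^{-1},p_2)$, and only the two \emph{crossing} concatenations $Z\to X\cdot Y^{-1}$ and $Z^{-1}\to Y\cdot X^{-1}$ need to be added (the backtracking concatenations $X\cdot X^{-1}$ and $Y\cdot Y^{-1}$ are trivial under $\pi$ and may be discarded). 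This is exactly what makes the count close: four transitions removed, two transitions and two non-terminals added. Similarly, at a state with one incoming and one outgoing transition, involutivity forces these to be mutually inverse, so the state is a dead end and is simply deleted together with both transitions, strictly decreasing the measure; your generic concatenation would instead create a loop labelled by a word reducing to $\epsilon$. Relatedly, your proposed monovariant is wrong: $\abs{\delta}+\abs{\mathcal{X}}$ does \emph{not} strictly decrease at a suppression step (it is preserved), so termination should instead be argued via $\abs{Q}$, which does strictly decrease.

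On the other hand, you are more careful than the paper about condition (1): the paper's proof does not address free reduction of labels at all (and indeed the concatenation $\word{X}\cdot\word{Y}^{-1}$ it introduces need not be reduced), whereas you invoke the compressed free-reduction machinery of \cite{saul_08} and \cite{hagenah_00}. However, if you do this you must also check that replacing a non-terminal by a composition system for its free reduction does not increase $\abs{\mathcal{X}}$ beyond the stated bound, which is not automatic from those references; and you must handle the case where a label reduces to $\epsilon$, which forces an identification of states and can change valences, feeding back into your cascade concern.
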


\begin{proof}
Let $\mathcal{A}$ be the input involutive CNFA. For each state $q\in Q\setminus\{q_0, F\}$ such that $q$ has only one incoming and one outgoing transition, remove $q$ and the two transitions. Then, for each state $q\in Q\setminus\{q_0, F\}$ such that $q$ has two incoming transitions $(p_1, X, q)$, $(p_2, Y, q)$ and two outgoing transitions $(q, X^{-1}, p_1)$, $(q, Y^{-1}, p_2)$, remove these transitions and the state $q$ and add two non-terminals $Z\to X\cdot Y^{-1}$ and $Z^{-1}\to Y\cdot X^{-1}$ and two new transitions $(p_1, Z, p_2)$ and $(p_2, Z^{-1}, p_1)$. The language is clearly preserved under these operations and the output $\mathcal{A'}$ will be minimalistic by construction.
\end{proof}

The following lemma will be used for our inductive hypothesis when bounding the number of transition folds to be performed in Algorithm \ref{CDFA_algorithm}.

\begin{lma}
\label{transition_bound}
Let $\mathcal{A}$ be an minimalistic involutive CNFA. Then:
\begin{align*}
\frac{\abs{\delta}}{2}&\leq 3\cdot \max{\left\{0, \frac{\abs{\delta}}{2} - \abs{Q}\right\}} + \abs{F} + \abs{\partial \mathcal{A}} + 1,\\
\abs{Q} &\leq 2\cdot \max{\left\{0, \frac{\abs{\delta}}{2} - \abs{Q}\right\}} + \abs{F} + \abs{\partial \mathcal{A}} + 1.
\end{align*}
\end{lma}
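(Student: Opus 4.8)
The plan is to forget the automaton structure and argue purely in the underlying undirected graph. Since $\mathcal{A}$ is involutive, the involution ${}^{-1}\colon\delta\to\delta$ pairs each transition with its inverse, and no transition is its own inverse (that would require an empty label, which a minimalistic automaton has none of); so the transitions split into $m:=\abs{\delta}/2$ geometric edges on the $n:=\abs{Q}$ vertices. The involution also matches incoming transitions at a state bijectively with outgoing ones, so the in-degree and out-degree of every state agree; call this common value $d(p)$, the degree of $p$ in the geometric graph. Two facts drive everything: the handshake identity $\sum_{p\in Q} d(p)=\abs{\delta}=2m$, and the consequence of minimalistic condition (2) that every state outside $S:=\{q_0\}\cup F$ has $d(p)\ge 3$. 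Writing $n_i$ for the number of states of degree $i$, this says that all states of degree at most $2$ lie in $S$, whence $n_0+n_1+n_2\le \abs{F}+1$; moreover $\partial\mathcal{A}$ is exactly the set of degree-$1$ states, so $\abs{\partial\mathcal{A}}=n_1$. I will assume $\mathcal{A}$ has no isolated states (that is, $n_0=0$), which is the only place connectivity of the automata produced by the algorithm enters.

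The arithmetic heart is the single estimate
\[
3n-\abs{\delta}\le \abs{F}+\abs{\partial\mathcal{A}}+1.
\]
To obtain it I split the handshake identity by degree: with $n_0=0$ we have $\abs{\delta}=\sum_{p}d(p)\ge n_1+2n_2+3n_{\geq 3}$, where $n_{\geq 3}$ counts states of degree at least $3$. Since $n=n_1+n_2+n_{\geq 3}$, subtracting gives $3n-\abs{\delta}\le 2n_1+n_2=n_1+(n_1+n_2)\le \abs{\partial\mathcal{A}}+(\abs{F}+1)$, as claimed.

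Next I separate two cases according to the sign of $m-n=\abs{\delta}/2-\abs{Q}$. If $\abs{\delta}/2\ge \abs{Q}$ then $\max\{0,m-n\}=m-n$, and a one-line rearrangement shows that \emph{both} target inequalities are equivalent to the displayed estimate $3n-\abs{\delta}\le \abs{F}+\abs{\partial\mathcal{A}}+1$, so both hold simultaneously. If instead $\abs{\delta}/2<\abs{Q}$ then $\max\{0,m-n\}=0$ and I argue directly: from the identity $\sum_{d(p)\ge 3}(d(p)-2)=\abs{\delta}-2\abs{Q}+\abs{\partial\mathcal{A}}$ together with $n_{\geq 3}\le\sum_{d(p)\ge3}(d(p)-2)$, the inequality $\abs{\delta}<2\abs{Q}$ forces $n_{\geq 3}<\abs{\partial\mathcal{A}}$; hence $\abs{Q}=n_1+n_2+n_{\geq 3}\le 2n_1+n_2\le \abs{\partial\mathcal{A}}+\abs{F}+1$, which is the second inequality, and then $\abs{\delta}/2<\abs{Q}\le \abs{F}+\abs{\partial\mathcal{A}}+1$ delivers the first.

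The whole content is thus the handshake lemma together with the minimal-degree-$3$ property; there is no deep obstacle, only the bookkeeping needed so that the exceptional low-degree vertices are absorbed exactly by the correction term $\abs{F}+\abs{\partial\mathcal{A}}+1$, and the sign analysis that makes $\max\{0,m-n\}$ appear. The one point that genuinely needs care is the absence of isolated states: an isolated final state contributes to $\abs{Q}$ without being caught by either $\abs{\partial\mathcal{A}}$ or by the degree count, and a short check shows the second inequality can fail if such states are permitted; so I would record that the relevant automata are connected (equivalently, trim).
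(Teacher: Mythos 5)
Your proof is correct and is essentially the paper's own argument: both rest on the handshake identity $\sum_q d(q)=\abs{\delta}$ together with the observation that minimalisticity forces every state outside $\{q_0\}\cup F$ to have in- and out-degree at least $3$, with the exceptional low-degree states absorbed into the term $\abs{F}+\abs{\partial\mathcal{A}}+1$; your case split on the sign of $\frac{\abs{\delta}}{2}-\abs{Q}$ is not actually needed, since the inequality $x\leq\max\{0,x\}$ lets the single rearrangement of the key estimate cover both cases at once, which is what the paper does. Your caveat about isolated states is a genuine and worthwhile observation: an isolated state in $\{q_0\}\cup F$ invalidates the bound $\deg(q)\geq 4$ that the paper implicitly uses for states of $M=(\{q_0\}\cup F)-\partial\mathcal{A}$ just as it breaks your count (for instance, an isolated initial state attached to a $3$-regular component violates the second inequality), but this is harmless in context because the automata arising in Algorithm~\ref{CDFA_algorithm} are connected.
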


\begin{proof}
Denote by $M = (\{q_0\}\cup F) - \partial \mathcal{A}$. If $\deg(q)$ denotes the number of incoming and outgoing transitions from the state $q$, then we have $\deg(q)\geq 6$ for all $q\in Q - (M\sqcup \partial\mathcal{A})$, provided $\delta\neq \emptyset$. Thus:
\begin{align*}
\frac{\abs{\delta}}{2} &= \sum_{q\in Q}\frac{\deg(q)}{4} \\
	&\geq \frac{3}{2}\cdot (\abs{Q} - \abs{M} - \abs{\partial\mathcal{A}}) + \abs{M} + \frac{1}{2}\cdot \abs{\partial\mathcal{A}} \\
	&= \frac{3}{2}\cdot \abs{Q} - \frac{1}{2}\cdot \abs{M} - \abs{\partial\mathcal{A}}.
\end{align*}
Since $\abs{M} + \abs{\partial\mathcal{A}}\leq \abs{F} + 1$, by subtracting $\frac{3}{4}\cdot \abs{\delta}$ from both sides and multiplying by $-2$, we obtain the first inequality. Similarly, by subtracting $\abs{Q}$ from both sides and multiplying by $2$ we obtain the second inequality.
\end{proof}

\subsection{The algorithm}

We are now ready to present our involutive CNFA to involutive CDFA conversion algorithm. See Algorithm \ref{CDFA_algorithm} for the outline. Denote by
\[
k(\mathcal{A}) = 3\cdot \max{\left\{0, \frac{\abs{\delta}}{2} - \abs{Q}\right\}} + \abs{F} + \abs{\partial \mathcal{A}} + 1
\]
and 
\[
n(\mathcal{A}) = \sum_{(p, X, q)\in \delta}\frac{\abs{\word{X}}}{2}.
\]
The following proposition shows that the algorithm solves the involutive CNFA to CDFA conversion problem and also bounds the number of transition folds performed in terms of $k(\mathcal{A})$ and $n(\mathcal{A})$. The main idea is that, by our choice of the order in which the transition folds are performed, each complete transition fold gets rid of a definite proportion of the CNFA. Note that the size of the input to Algorithm \ref{CDFA_algorithm} is bounded below by $\log_2(n(\mathcal{A}))$.

\begin{algorithm}
\caption{Involutive CNFA to CDFA conversion}\label{CDFA_algorithm}
\begin{enumerate}
\item\label{itm:initiate} Make $\mathcal{A}$ minimalistic. Let $i = 0$ and $\mathcal{A} = \mathcal{A}_0$.
\item\label{itm:subfold} Let $\alpha_i = (p, X, q)\in \delta_i$ such that $\abs{\word{X}}$ is maximal over all transitions. If $\mathcal{A}_i' = (Q, \Sigma\sqcup\Sigma^{-1}, \mathcal{X}, \delta - \{\alpha_i, \alpha_i^{-1}\}, \mathcal{P}, q_0, F\cup\{p, q\})$ is a CDFA, then continue to \ref{itm:fold}. If not, then convert $\mathcal{A}_i'$ to a CDFA and let $\mathcal{A}_i''$ be the output. Now let $\mathcal{A}_i'''$ be obtained from $\mathcal{A}_i''$ by adding $\alpha_i$ and $\alpha_i^{-1}$ to its transition set and removing $p$ and $q$ from its final states if they were not final states in $\mathcal{A}_i$.
\item\label{itm:fold} Fold $\alpha_i$ and make the resulting CNFA minimalistic. Let $\mathcal{A}_{i+1}$ be the output. If $\mathcal{A}_{i+1}$ is a CDFA then terminate with output $\mathcal{A}_{i+1}$. If not, then let $i := i+1$ and go back to \ref{itm:subfold}.
\end{enumerate}
\end{algorithm}

\begin{prop}
\label{fold_bound}
Let $\mathcal{A}$ be a non-trivial involutive CNFA and let $k = k(\mathcal{A})$ and $n = n(\mathcal{A})$. Then Algorithm \ref{CDFA_algorithm} performs at most $(2\cdot k\cdot \log(n))^k$ many transition folds and outputs a CDFA $\mathcal{A}_m$ such that $\pi(L(\mathcal{A}_m)) = \pi(L(\mathcal{A}))$.
\end{prop}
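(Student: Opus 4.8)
The plan is to prove correctness and the fold bound separately, the first directly and the second by strong induction on $k = k(\mathcal{A})$; throughout, $n$ will denote $n(\mathcal{A})$. For correctness I would first observe that $\pi(L(\mathcal{A}_i))$ is preserved by every operation the algorithm performs: a transition fold preserves it by Lemma \ref{fold_properties}(1) and keeps all labels freely reduced by Lemma \ref{fold_properties}(3), passing to a minimalistic automaton preserves it by Lemma \ref{minimalistic_algorithm}(2), and the recursive conversion invoked in step \ref{itm:subfold} preserves it by the inductive hypothesis. For termination I would use that after step \ref{itm:subfold} the automaton obtained by deleting $\alpha_i$ and $\alpha_i^{-1}$ is a CDFA; hence if the fold carried out in step \ref{itm:fold} is \emph{incomplete}, Lemma \ref{incomplete_fold} forces $\mathcal{A}_{i+1}$ to be a CDFA and the algorithm halts with a correct output. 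Thus every iteration that does not halt performs a \emph{complete} fold, and it remains only to count these.

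Next I would bound the top-level loop. The crucial observation is that a complete fold of the chosen transition $\alpha_i = (p, X, q)$ strictly shrinks $n$: in each of Cases 1, 2.1 and 2.2 the equality $i + j = \abs{\word{X}}$ forces the residual middle segment $X[i:-j]$ to be empty, so $\alpha_i$ is consumed entirely and $n$ falls by $\tfrac{1}{2}\abs{\word{X}}$, while re-minimalizing merely reroutes labels and leaves $n$ fixed. Since $\abs{\word{X}}$ is maximal and $\mathcal{A}_i$ is minimalistic, Lemma \ref{transition_bound} gives $\abs{\delta_i}/2 \le k$, so $\abs{\word{X}}$ is at least the average label length $4n/\abs{\delta_i} \ge 2n/k$, giving $n_{i+1} \le (1 - 1/k)\,n_i$. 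Consequently after at most $2k\log n$ complete folds we reach $n < 1$, whereupon $\mathcal{A}_i$ has no transitions and is a CDFA; so the loop runs at most $2k\log n$ times.

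Finally I would set up the recursion. The recursive call in step \ref{itm:subfold} is applied to the automaton $\mathcal{A}_i'$ obtained from the minimalistic $\mathcal{A}_i$ by deleting the single geometric edge $\alpha_i$ and marking its endpoints final. Deleting one edge lowers $\abs{\delta}/2$ by one while fixing $\abs{Q}$, so $\max\{0, \abs{\delta}/2 - \abs{Q}\}$ decreases by one and contributes $-3$ to $k$, whereas $\abs{F}$ increases by at most two; and because every non-special state of a minimalistic involutive automaton has in- and out-degree at least three, the endpoints $p, q$ keep degree at least four after the deletion and so never enter $\partial\mathcal{A}$, leaving $\abs{\partial\mathcal{A}}$ unchanged. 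Balancing $-3$ against the gain of at most two in $\abs{F}$ yields $k(\mathcal{A}_i') \le k - 1$, so writing $T(k, n)$ for the number of folds the inductive hypothesis bounds the recursive call by $(2(k-1)\log n)^{k-1}$ and I would conclude
\[
T(k, n) \le 2k\log n\,\bigl(1 + T(k-1, n)\bigr) \le (2k\log n)^k.
\]

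I expect the main obstacle to be exactly the bookkeeping sketched in the last two paragraphs: showing that $k$ is non-increasing along the top-level iterations (so the factor $2k\log n$ may legitimately use the original $k$) and that $k(\mathcal{A}_i') \le k - 1$ at each recursive call. Both rest on playing the guaranteed drop of $\max\{0, \abs{\delta}/2 - \abs{Q}\}$ off against the only terms that can grow, namely $\abs{F}$ and $\abs{\partial\mathcal{A}}$, using Lemma \ref{transition_bound} and the degree bound on non-special states. The cases that will require genuine care are when a fold endpoint is the initial state or is already final, and when deleting $\alpha_i$ disconnects the underlying graph; each of these must be verified not to violate $k(\mathcal{A}_i') \le k - 1$.
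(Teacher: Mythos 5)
Your argument follows the paper's proof essentially step for step: induction on $k$, geometric decay of $n(\mathcal{A}_i)$ under complete folds via the maximality of $\abs{\word{X}}$ and Lemma \ref{transition_bound}, the bound $k(\mathcal{A}_i')\leq k-1$ for the recursive call obtained by trading the drop of $3$ in the $\max$ term against the gain of at most $2$ in $\abs{F}+\abs{\partial\mathcal{A}}$, and a final summation/recursion giving $(2k\log n)^k$. The bookkeeping worries you flag at the end (monotonicity of $k(\mathcal{A}_i)$, endpoints that are initial or already final, and the case where $\max\{0,\abs{\delta}/2-\abs{Q}\}$ is already zero) are treated no more explicitly in the paper than in your sketch, so you have not missed anything the paper supplies.
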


\begin{proof}
We first note that $\frac{\abs{\delta}}{2}\leq k$ by Lemma \ref{transition_bound} and that $k(\mathcal{A}')\geq 3$ for all CNFAs $\mathcal{A}'$. We also note that for all $i\geq 2$:
\[
\frac{1}{\log\left(\frac{i}{i-1}\right)}\leq i
\]
so that it suffices to show that Algorithm \ref{CDFA_algorithm} performs at most $\left(2\cdot \log_{\frac{k}{k-1}}(n)\right)^k$ many transition folds.

The proof is by induction on $k$. The base case when $k = 3$ is trivial as $\mathcal{A}$ has only two states and one transition. So now suppose that the result is true for all minimalistic involutive CNFAs $\mathcal{A}'$ with $k(\mathcal{A}')<k(\mathcal{A})$.

We have $k(\mathcal{A}_i)\leq k(\mathcal{A})$ for all $i$ by the definition of a transition fold. By Lemma \ref{incomplete_fold}, $\mathcal{A}_i$ is obtained from $\mathcal{A}_{i-1}'''$ by a complete transition fold for all $i<m$. Thus, since $\abs{\word{X}}\geq \frac{1}{k}\cdot n(\mathcal{A}_i)$, it follows that
\[
n(\mathcal{A}_i)\leq \frac{k-1}{k}\cdot n(\mathcal{A}_{i-1})\leq \left(\frac{k-1}{k}\right)^i\cdot n(\mathcal{A})
\]
for all $i<m$. But then we get that
\[
m \leq \log_{\frac{k}{k-1}}(n(\mathcal{A})).
\]
Suppose that $p\notin F$, then $p\notin \partial \mathcal{A}_i'$ as $\mathcal{A}_i$ is minimalistic. Similarly for $q$. Thus, $k(\mathcal{A}_i')\leq k(\mathcal{A}) - 1$ and the inductive hypothesis applies to $\mathcal{A}_i'$. So $\mathcal{A}_i''$ is obtained from $\mathcal{A}_i'$ by performing at most
\[
\left(2\cdot \log_{\frac{k-1}{k-2}}\left(\left(\frac{k-1}{k}\right)^i\cdot n(\mathcal{A})\right)\right)^{k-1}\leq 2^{k-1}\cdot \left(\log_{\frac{k}{k-1}}\left(n(\mathcal{A})\right) - i\right)^{k-1}
\]
many transition folds. Finally, summing over everything we get a bound of:
\[
m + \sum_{i=0}^{m-1} 2^{k-1}\cdot\left(\log_{\frac{k}{k-1}}\left(n(\mathcal{A})\right) - i\right)^{k-1}\leq \left(2\cdot \log_{\frac{k}{k-1}}(n(\mathcal{A}))\right)^k
\]
many transition folds.

The fact that $\mathcal{A}_m$ is a CDFA follows from the algorithm. The fact that $\pi(L(\mathcal{A}_m)) = \pi(L(\mathcal{A}))$ follows from Lemmas \ref{fold_properties} and \ref{minimalistic_algorithm}. Thus, the algorithm is correct.
\end{proof}

Finally, putting everything together, we may provide complexity bounds for the compressed involutive CNFA to CDFA conversion problem.

\begin{thm}
\label{conversion_complexity}
There is an algorithm that, given as input an involutive CNFA $\mathcal{A}$, computes an involutive CDFA $\mathcal{A}'$ such that $\pi(L(\mathcal{A}')) = \pi(L(\mathcal{A}))$ in $O\left(n^{O(\abs{\delta})}\right)$ time.
\end{thm}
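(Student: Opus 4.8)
The plan is to bound the running time of Algorithm \ref{CDFA_algorithm}, whose correctness is already guaranteed by Proposition \ref{fold_bound}: it terminates with a CDFA $\mathcal{A}_m$ satisfying $\pi(L(\mathcal{A}_m)) = \pi(L(\mathcal{A}))$. Writing $N$ for the total size of the input $\mathcal{A}$, so that the asserted bound reads $O\left(N^{O(\abs{\delta})}\right)$, the running time is the number of transition folds performed multiplied by the cost of a single fold (including the minimalisation and determinism checks that accompany it). I would therefore bound these two quantities separately in terms of $N$ and $\abs{\delta}$, and then argue that their product collapses to $N^{O(\abs{\delta})}$.

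First I would convert the fold bound of Proposition \ref{fold_bound} into input-size terms. Assuming the input is connected we have $\abs{Q}\leq \abs{\delta}+1$, hence $\abs{F}, \abs{\partial\mathcal{A}}\leq \abs{Q}$ and
\[
k(\mathcal{A}) = 3\cdot \max{\left\{0, \frac{\abs{\delta}}{2} - \abs{Q}\right\}} + \abs{F} + \abs{\partial\mathcal{A}} + 1 = O(\abs{\delta}).
\]
Since each production has size at most $N$, every $\word{X}$ has length at most $2^{O(N)}$, so $\log(n(\mathcal{A})) = O(N)$. Substituting these into the bound of Proposition \ref{fold_bound} and using $\abs{\delta}\leq N$ gives a total of
\[
\left(2\cdot k(\mathcal{A})\cdot \log(n(\mathcal{A}))\right)^{k(\mathcal{A})} = \left(O(N^2)\right)^{O(\abs{\delta})} = N^{O(\abs{\delta})}
\]
transition folds, counting also those performed inside the recursive conversions of Step \ref{itm:subfold}.

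Next I would control the description size of the automaton as the algorithm runs. By Lemma \ref{transition_bound} each intermediate minimalistic automaton satisfies $\abs{\delta_i}/2\leq k(\mathcal{A}_i)\leq k(\mathcal{A}) = O(\abs{\delta})$, so the number of states and transitions never exceeds $O(N)$. By Lemma \ref{fold_properties} a single fold leaves $\abs{\mathcal{X}}$ unchanged, since its new labels are truncated non-terminals of existing ones, and raises $\abs{\delta}$ by at most $4$; the subsequent minimalisation of Lemma \ref{minimalistic_algorithm} keeps $\abs{\delta}+\abs{\mathcal{X}}$ under control and restores $\abs{\delta} = O(N)$. As every intermediate $n(\mathcal{A}_i)$ is at most $n(\mathcal{A})$, all truncation indices occupy only $O(N)$ bits, so each iteration enlarges the description by at most $\mathrm{poly}(N)$. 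After the $N^{O(\abs{\delta})}$ folds counted above, the resulting composition system therefore has total size $N^{O(\abs{\delta})}$. The cost of one iteration---the determinism test and recursive conversion of Step \ref{itm:subfold}, the transition fold of Theorem \ref{transition_fold}, and the minimalisation of Lemma \ref{minimalistic_algorithm}, together with the conversions of composition systems to SLPs and the calls to Theorems \ref{jez_compressed} and \ref{fully_prefix}---is polynomial in this description size, hence $N^{O(\abs{\delta})}$. Multiplying the $N^{O(\abs{\delta})}$ folds by this $N^{O(\abs{\delta})}$ per-fold cost yields the claimed bound $O\left(N^{O(\abs{\delta})}\right)$.

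The main obstacle I expect is precisely this complexity bookkeeping. The combinatorial heart of the argument---bounding the number of folds and establishing correctness---is handed to us by Proposition \ref{fold_bound}; the remaining work is to verify that repeatedly truncating, concatenating and re-compressing transition labels never inflates the description (the count and heights of non-terminals, and the bit-lengths of truncation indices) beyond $N^{O(\abs{\delta})}$, and that the product of the fold count with the per-fold cost remains of the form $N^{O(\abs{\delta})}$. Both points reduce to confirming that each primitive operation runs in time polynomial in a quantity that itself remains $N^{O(\abs{\delta})}$ throughout.
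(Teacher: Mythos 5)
Your proposal is correct and follows essentially the same route as the paper: delegate termination, correctness and the fold count to Proposition \ref{fold_bound} (via Lemma \ref{transition_bound}), bound the cost of a single iteration using Theorem \ref{transition_fold} and Lemma \ref{minimalistic_algorithm}, and multiply. Your extra bookkeeping on the growth of the compressed description across iterations (arriving at a per-fold cost of $N^{O(\abs{\delta})}$ rather than the paper's bald claim of ``polynomial in $n$'') is a reasonable refinement that still yields the stated bound.
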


\begin{proof}
By Lemma \ref{minimalistic_algorithm}, Step \ref{itm:initiate} requires polynomial time. By Lemma \ref{transition_bound} and Proposition \ref{fold_bound}, Algorithm \ref{CDFA_algorithm} performs Step \ref{itm:fold} at most $O\left(n^{O(\abs{\delta})}\right)$ many times. By Lemma \ref{incomplete_fold}, Theorem \ref{transition_fold} and Lemma \ref{minimalistic_algorithm}, Step \ref{itm:fold} requires time polynomial in $n$. So overall, Algorithm \ref{CDFA_algorithm} is in $O\left(n^{O(\abs{\delta})}\right)$.
\end{proof}

Applying the main results from each section to the context of free groups, we prove our main theorem.

\begin{thm}
\label{main_subgroup}
Given as input a collection $\mathbb{W}, \mathbb{W}_1, \ldots, \mathbb{W}_k$ of SLPs over $\Sigma\sqcup\Sigma^{-1}$, we may do the following in $O\left(n^{O(k)}\right)$ time:
\begin{enumerate}
\item Compute a compressed Stallings automaton for $H = \langle\pi(\word{\mathbb{W}_1}), \ldots, \pi(\word{\mathbb{W}_k})\rangle<F(\Sigma)$.
\item Compute an SLP $\mathbb{X}$ such that $\pi(\word{\mathbb{W}})\in H\cdot \pi(\word{\mathbb{X}})$ and $\abs{\word{\mathbb{X}}}$ is smallest possible.
\item Compute an SLP $\mathbb{X}$ such that $\langle\pi(\word{\mathbb{W}})\rangle\cap H = \langle \pi(\word{\mathbb{X}})\rangle$.
\end{enumerate}
\end{thm}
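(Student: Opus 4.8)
The plan is to build all three outputs on top of the compressed Stallings automaton constructed in part (1), using Theorem \ref{fully_prefix} for part (2) and Theorem \ref{intersection} for part (3).

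For part (1), I would first compute, in polynomial time, SLPs $\mathbb{X}_1,\ldots,\mathbb{X}_k$ representing the freely reduced words $\red(\pi(\word{\mathbb{W}_i}))$ by standard manipulation of composition systems. I then form the involutive CNFA $\mathcal{A}_0$ consisting of a single state $q_0$, declared both initial and final, together with the loops $(q_0, X_i, q_0)$ and their formal inverses; this is the compressed analogue of the wedge of loops in the classical Stallings construction, so $\pi(L(\mathcal{A}_0)) = H$ while $\abs{\delta} = 2k$. Feeding $\mathcal{A}_0$ into the conversion algorithm of Theorem \ref{conversion_complexity} produces an involutive CDFA $\mathcal{A}$ with $\pi(L(\mathcal{A})) = H$ in time $O(n^{O(k)})$, and applying $\red$ gives $(\red\circ\pi)(L(\mathcal{A})) = \red(H)$. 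Since the decompressed automaton underlying $\mathcal{A}$ is the folded core graph of $H$, the unique minimal involutive DFA for $H$, and since $\mathcal{A}$ is minimalistic, $\mathcal{A}$ is a compressed Stallings automaton. Because the rank of $H$ is at most $k$, this automaton has $O(k)$ states and transitions (with SLP labels), a fact used below.

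For part (2), I want the minimal-length right coset representative of $H\pi(\word{\mathbb{W}})$. Writing $w = \red(\pi(\word{\mathbb{W}}))$, let $\mathcal{B}$ be $\mathcal{A}$ with every state made final, so that $L(\mathcal{B})$ is the set of labels of paths starting at $q_0$. Applying Theorem \ref{fully_prefix} to $\mathcal{B}$ and (the reduced SLP for) $\mathbb{W}$ yields the largest $i$ with $w[:i]\in L(\mathcal{B})$, i.e. the point at which reading $w$ leaves the automaton; rerunning Theorem \ref{fully_prefix} with each individual state taken as the unique final state (there are only $O(k)$ of them) identifies, as the state $q$ with maximal readable prefix equal to $i$, the state $p$ reached at position $i$. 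A shortest-path computation in the $O(k)$-vertex graph, with edge weights $\abs{\word{X}}$, then produces an SLP for a geodesic $s$ from $q_0$ to $p$, and the required output is an SLP $\mathbb{X}$ for $\red(s\cdot w[i:])$. Since both $w[:i]$ and $s$ label paths from $q_0$ to $p$ we have $Hw = H\cdot s\cdot w[i:]$, and the decomposition of the Schreier graph of $H$ into its core (the Stallings graph) and its hanging trees guarantees this word is of minimal length.

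For part (3), observe that $\langle\pi(\word{\mathbb{W}})\rangle\cap H=\langle w^{d_0}\rangle$, where $d_0\geq 0$ is the least positive integer with $w^{d_0}\in H$ (the intersection being trivial if none exists), since $\{d\in\Z : w^d\in H\}$ is a subgroup of $\Z$. I would cyclically reduce $w$, obtaining SLPs $\mathbb{U},\mathbb{C}$ with $\word{\mathbb{U}}\cdot\word{\mathbb{C}}\cdot\word{\mathbb{U}}^{-1}=w$ and $\word{\mathbb{C}}=c$ cyclically reduced, so that $\red(w^d)=u\,c^d\,u^{-1}$ and $w^d\in H$ iff $c^d\in u^{-1}Hu$. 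I compute a compressed Stallings automaton $\mathcal{C}$ for the conjugate subgroup $u^{-1}Hu$ by applying part (1) to the $k$ generators $u^{-1}\pi(\word{\mathbb{W}_i})u$. As $c$ is cyclically reduced, each $c^d$ is already reduced, so $c^d\in u^{-1}Hu$ iff $c^d\in L(\mathcal{C})$; intersecting $L(\mathcal{C})$ with $(\word{\mathbb{C}})^*$ via Theorem \ref{intersection} computes the exponent set $\{d\geq 0: c^d\in L(\mathcal{C})\}$, which is of the form $d_0\,\Z_{\geq 0}$, from which I read off the period $d_0$. I then output the SLP $\mathbb{W}^{d_0}$, since $\langle\pi(\word{\mathbb{W}^{d_0}})\rangle=\langle w^{d_0}\rangle$.

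I expect the main obstacle to be making part (2) both correct and genuinely minimal: the maximal readable prefix $w[:i]$ need not be a geodesic to the state $p$ it reaches, so one must replace it by the shortest path $s$ and then argue, via the core–plus–hanging-trees decomposition of the Schreier graph, that $\red(s\cdot w[i:])$ is truly shortest among all coset representatives (the core part is geodesic and the escaping suffix $w[i:]$ is the forced reduced path in the tree). Everything else is a matter of composing the earlier theorems; the dominant cost is the $O(n^{O(k)})$ conversion of part (1), which also bounds parts (2) and (3), as those invoke only polynomial-time subroutines a polynomial number of times.
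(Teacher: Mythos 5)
Your overall architecture matches the paper's: build the wedge-of-loops CNFA, convert it with Theorem \ref{conversion_complexity} to get the compressed Stallings automaton, then use Theorem \ref{fully_prefix} for the coset representative and Theorem \ref{intersection} for the cyclic intersection. Parts (1) and (3) are fine. In part (3) you take a genuinely different (and arguably more robust) route: the paper keeps the automaton $\mathcal{A}'$ for $H$, locates the endpoint $\alpha(i)$ of the path labelled $\word{\mathbb{U}}$ \emph{inside} a compressed transition, subdivides there and rebases, whereas you rerun the whole of part (1) on the conjugated generators $u^{-1}\pi(\word{\mathbb{W}_i})u$ to get a Stallings automaton for $u^{-1}Hu$ directly. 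Your version avoids the mid-transition bookkeeping at the cost of a second $O(n^{O(k)})$ conversion, which is harmless for the stated bound.

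Part (2), however, has a genuine gap, and it is exactly at the point you flagged as the main obstacle. You compute the maximal $i$ with $w[:i]\in L(\mathcal{B})$, i.e.\ the longest prefix of $w$ ending at a \emph{state} of the CDFA, and then take a geodesic $s$ in the $O(k)$-vertex state graph to that state $p$. But the reading of $w$ may leave the Stallings graph in the \emph{interior} of a compressed transition, and the geodesic to the exit vertex may approach it from the other end of that transition. Concretely, take $H=\langle a^{10}\rangle\leq F(a,b)$ and $w=a^9b$. The compressed Stallings automaton is a single state $q_0$ with a loop labelled $a^{10}$; the longest prefix of $w$ returning to a state is the empty prefix, so your construction outputs $s\cdot w[0:]=a^9b$ of length $10$, while the minimal coset representative is $a^{-1}b$ of length $2$. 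Your justification via the core-plus-hanging-trees decomposition is correct for the decompressed Schreier graph, but the ``core part'' of the geodesic to $Hw$ ends at the exit \emph{vertex}, which need not be a state of the compressed automaton. The fix is to compute the exit position as a pair $(\alpha,j)$ with $\alpha=(p,X,q)$ a transition and $0\leq j<\abs{\word{X}}$ (which is what the paper's phrase ``determines a path to'' is designed to capture, and what Proposition \ref{transition_crossing} together with compressed longest-common-prefix queries lets you do), and then take the shorter of $d(q_0,p)+j$ and $d(q_0,q)+(\abs{\word{X}}-j)$, splicing in the appropriate truncated non-terminal $X[:j]$ or $X^{-1}[:\abs{\word{X}}-j]$ before appending the tree suffix $w[i+j:]$. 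With that repair (and a final free reduction of $s\cdot w[i:]$, since the last edge of $s$ can cancel against the first edge of the escaping segment), part (2) goes through.
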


\begin{proof}
Let $W$ be the root non-terminal for $\mathbb{W}$ and $W_i$ the root non-terminals for $\mathbb{W}_i$ for each $i$. Let $\mathcal{X}$ be the union of all the non-terminals appearing in each $\mathbb{W}_i$ and $\mathcal{P}$ the union of all the productions. Consider the CNFA
\[
\mathcal{A} = (\{q_0\}, \Sigma\sqcup\Sigma^{-1}, \mathcal{X}, \{(q_0, W_i, q_0), (q_0, W_i^{-1}, q_0)\}_{i=1}^k, \mathcal{P}, q_0, \{q_0\}).
\]
We have $\pi(L(\mathcal{A})) = H$. By Theorem \ref{conversion_complexity} we may compute an minimalistic involutive CDFA $\mathcal{A}'$ such that $\pi(L(\mathcal{A}')) = \pi(L(\mathcal{A}))$ in $O(n^{O(k)})$ time. At no point in the algorithm can an unreachable state be created. Combined with the fact that $\mathcal{A}'$ is minimalistic, it follows that $\mathcal{A}'$ is a compressed Stallings automaton for the subgroup $H$.

By Theorem 3.3 in \cite{saul_08}, we may compute an SLP $\mathbb{W}'$ such that $\word{\mathbb{W}'}$ is freely reduced and $\pi(\word{\mathbb{W}'}) = \pi(\word{\mathbb{W}})$. Let $i$ be the maximal integer such that $\mathbb{W}'[:i]$ determines a path from the start state of $\mathcal{A}'$ to any other state. Let $p$ be this state. By Theorem \ref{fully_prefix}, we may compute $p$ and $i$ in polynomial time. We may also compute the shortest path from the start state to $p$ in polynomial time. Let $X_1\cdot \ldots\cdot X_m$ be the sequence of labels determined by this path. Then if $\mathbb{X}$ is an SLP with $\word{\mathbb{X}} = \word{X_1}\cdot \ldots\cdot \word{X_m}\cdot \word{\mathbb{W}'}[i:]$, then $\pi(\word{\mathbb{W}})\in H\cdot \pi(\word{X_1\cdot \ldots\cdot X_m\cdot \mathbb{W}'[i:]})$ and $\abs{\word{\mathbb{X}}}$ is minimal possible.

By Corollary 3.6 in \cite{saul_08}, we may compute in polynomial time SLPs $\mathbb{U}$ and $\mathbb{V}$ such that $\word{\mathbb{U}}\cdot \word{\mathbb{V}}\cdot \word{\mathbb{U}^{-1}} = \word{\mathbb{W}'}$ with $\abs{\word{\mathbb{U}}}$ maximal possible. By Theorem \ref{fully_prefix}, we may decide if there is a transition $\alpha\in \delta'$ and an integer $i$ such that $\word{\mathbb{U}}$ determines a path from the start state of $\mathcal{A}'$ to $\alpha(i)$. Furthermore, if it does, we may compute $\alpha = (p, X, q)$ and $i$ in polynomial time. Then we modify $\mathcal{A}'$ as follows: add a new state $p'$ and new transitions $(p, X[:i], p')$, $(p', X^{-1}[-i:], p)$, $(p', X[i:], q)$ and $(q, X^{-1}[:i], p')$. Remove the transitions $\alpha$ and $\alpha^{-1}$ and make $p'$ the initial and final state. Let $\mathcal{A}''$ be the resulting CDFA. Now $\langle\pi(\word{\mathbb{W}})\rangle\cap H = \langle \pi(\word{\mathbb{W}})^m\rangle$ if and only if $(\word{\mathbb{V}})^*\cap L(A'') = (\word{\mathbb{V}}^m)^*$. Finally, we may compute $m$ in polynomial time by Theorem \ref{intersection}.
\end{proof}

\begin{cor}
\label{main_cor}
The fully compressed membership problem for $k$-generated subgroups of a free group  is in P.
\end{cor}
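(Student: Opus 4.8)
The plan is to derive Corollary \ref{main_cor} as an immediate application of part (2) of Theorem \ref{main_subgroup}. The fully compressed membership problem for a $k$-generated subgroup asks, given SLPs $\mathbb{W}, \mathbb{W}_1, \ldots, \mathbb{W}_k$ over $\Sigma \sqcup \Sigma^{-1}$, whether $\pi(\word{\mathbb{W}}) \in H$, where $H = \langle \pi(\word{\mathbb{W}_1}), \ldots, \pi(\word{\mathbb{W}_k})\rangle$. The key observation is that $\pi(\word{\mathbb{W}}) \in H$ if and only if the coset representative of $\pi(\word{\mathbb{W}})$ modulo $H$ is trivial.

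First I would invoke Theorem \ref{main_subgroup}(2) on the given input. This produces, in $O(n^{O(k)})$ time, an SLP $\mathbb{X}$ such that $\pi(\word{\mathbb{W}}) \in H \cdot \pi(\word{\mathbb{X}})$ with $\abs{\word{\mathbb{X}}}$ smallest possible. Geometrically, $\word{\mathbb{X}}$ is the label of the shortest path in the compressed Stallings automaton $\mathcal{A}'$ from the start state, followed by the uncovered suffix of (the reduced form of) $\word{\mathbb{W}}$; its length is minimal over the coset $H \cdot \pi(\word{\mathbb{W}})$.

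The final step is to read off the answer from $\mathbb{X}$. Since $\word{\mathbb{X}}$ has minimal length in its coset, we have $\pi(\word{\mathbb{W}}) \in H$ precisely when $\pi(\word{\mathbb{X}})$ is trivial, which happens exactly when $\abs{\word{\mathbb{X}}} = 0$. Because $\mathbb{X}$ is an SLP, we may compute $\abs{\word{\mathbb{X}}}$ in polynomial time by a standard bottom-up pass over the production rules (summing lengths of productions), and then simply test whether this length is zero. All of this is dominated by the $O(n^{O(k)})$ cost of the call to Theorem \ref{main_subgroup}, which is polynomial for fixed $k$.

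I do not anticipate any genuine obstacle here, since the substantive work has already been carried out in Theorem \ref{main_subgroup}: the construction of the compressed Stallings automaton via Theorem \ref{conversion_complexity}, and the minimal-length coset computation via Theorem \ref{fully_prefix}. The only thing to verify carefully is the equivalence ``$\pi(\word{\mathbb{W}}) \in H \iff \abs{\word{\mathbb{X}}} = 0$,'' which follows immediately from the minimality clause in part (2): the trivial element is the unique shortest element of $H$ itself, so minimal coset length vanishes exactly on the coset $H$.
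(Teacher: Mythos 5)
Your proposal is correct and matches the paper's intended derivation: the paper offers no separate argument for Corollary \ref{main_cor} beyond noting that it follows directly from Theorem \ref{main_subgroup}, and your reduction via part (2) — compute the minimal-length coset representative $\mathbb{X}$ and test $\abs{\word{\mathbb{X}}} = 0$ — is exactly that. The equivalence you flag is sound, since the empty SLP is an admissible $\mathbb{X}$ precisely when $\pi(\word{\mathbb{W}}) \in H$, so the minimality clause forces $\abs{\word{\mathbb{X}}} = 0$ in that case and only in that case.
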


\bibliographystyle{abbrv}
\bibliography{bibliography}

\end{document}